\numberwithin{equation}{section}
\theoremstyle{plain}
\newtheorem{theorem}{Theorem}[section]
\newtheorem*{theorem*}{Theorem}
\newtheorem{lemma}[theorem]{Lemma}
\newtheorem{proposition}[theorem]{Proposition}
\newtheorem*{proposition*}{Proposition}
\newtheorem*{conjecture*}{Conjecture}
\theoremstyle{definition}
\theoremstyle{remark}
\newtheorem*{remark*}{Remark}
\newcommand{\abs}[1]{\left\lvert #1 \right\rvert}
\newcommand{\norm}[1]{\left\lVert #1 \right\rVert}
\newcommand{\R}{\mathbb{R}}
\newcommand{\C}{\mathbb{C}}
\newcommand{\Z}{\mathbb{Z}}
\newcommand{\N}{\mathbb{N}}
\title[Nonradiating sources vanish at corners]{Nonradiating sources
  and transmission eigenfunctions vanish at corners and edges}
\author{Eemeli Bl{\aa}sten}
\address{Jockey Club Institute for Advanced Study, Hong Kong University of Science and Technology, Hong Kong SAR.}
\email{iaseemeli@ust.hk}
\begin{document}

\begin{abstract}
  We consider the inverse source problem of a fixed wavenumber: study
  properties of an acoustic source based on a single far- or
  near-field measurement. We show that nonradiating sources having a
  convex or non-convex corner or edge on their boundary must vanish
  there. The same holds true for smooth enough transmission
  eigenfunctions. The proof is based on an energy identity from the
  enclosure method and the construction of a new type of planar
  complex geometrical optics solution whose logarithm is a branch of
  the square root. The latter allows us to deal with non-convex
  corners and edges.

  \medskip 
  \noindent{\bf Keywords} inverse source problem, nonradiating, corner
  scattering, complex geometrical optics, interior transmission
  eigenfunction.

  \medskip
  \noindent{\bf Mathematics Subject Classification (2010)}: 35P25,
  78A46 (primary); 51M20, 81V80 (secondary).
\end{abstract}

\maketitle

\section{Introduction and results}
The inverse source problem is a longstanding open problem in
scattering theory. More formally let $f$ have compact support, $f =
\chi_\Omega \varphi$, $\Omega\subset\R^n$ a bounded domain and
$\varphi\in L^\infty(\R^n)$. Given a wavenumber $k>0$ the source
produces a scattered wave $u\in H^2_{loc}(\R^n)$ given by
\[
(\Delta+k^2) u = f, \qquad \lim_{r\to\infty} r^\frac{n-1}{2}
\big(\partial_r - ik \big) u = 0
\]
where $r=\abs{x}$. Then $u$ can be expanded as follows
\[
u(x) = \frac{e^{ik\abs{x}}}{\abs{x}^{(n-1)/2}} u^\infty(\hat{x}) +
\mathcal O(\abs{x}^{n/2})
\]
where $\hat x = x/\abs{x}$ is the radial variable. The function
$u^\infty\in L^2(\mathbb S^{n-1})$ is called the \emph{far-field
  pattern} of $u$ and models the measurements. Knowing $u^\infty$
cannot determine $f$. The reason is simple, if not informal:
$u^\infty$ is a function of $n-1$ independent variables, and $f$ is a
function $n$ independent variables.

An alternative formulation for the inverse source problem is the
following: given the Fourier transform $\hat f(\xi)$ for
$\abs{\xi}=k>0$ fixed, what can one determine from $f$ or its support?
This question is of great interest both for harmonic analysis and
applications in scattering theory. Studying the properties of
so-called \emph{nonradiating sources} has picked up interest in the
recent years. For the mathematical and physical background we refer to
the excellent thesis \cite{Gbur}. In short the inverse source problem
is ill-posed and its solution requires a-priori knowledge. In contrast
to the inverse scattering problem where there is some control on the
incident waves, the inverse source problem has received less attention
because. Despite this there are several reasons coming from
applications for understanding the source problem. For instance if the
object of interest is far away or otherwise not accessible to incident
waves, then one can only study its radiated field.

In this paper we consider convex polyhedral sources and scatterers
with only one measurement. An early paper, but for the conductivity
equation, is \cite{Friedman--Isakov} where the unique determination of
constant penetrable inclusions was proved by one boundary
measurement. This was then extended by many
\cite{Alessandrini--Isakov, Barcelo--Fabes--Seo, Kang--Seo1,
  Kang--Seo2, Seo}. A reconstruction formula for the convex hull of a
penetrable inclusion of the conductivity appeared in
\cite{IkehataConductivity}, and in \cite{IkehataScattering} a nonzero
wavenumber is considered. In \cite{Ikehata} the convex hull is
reconstructed for the inverse scattering and source
problems. Impenetrable obstacles have also been reconstructed, see
\cite{IkehataObstacle} and the references therein. For more details
see Section~2.2 of \cite{IkehataSurvey} and also
\cref{IkehataComparison} in this article.

Our research started with an alternative point of view. Theorems for
the interior transmission problem imply that given an acoustic
potential, at certain wavenumbers there is a sequence of normalized
incident waves that produce arbitrarily small far-field patterns.  In
\cite{BPS} it was shown that acoustic potentials with corners always
scatter any incident wave and so the far-field energy cannot reach
zero. As a consequence transmission eigenfunctions must vanish at
corners \cite{BLvanishing, BLvanishingNum}. Other studies \cite{HSV,
  PSV, EH} generalise the geometric setting and show uniqueness for
shape determination by one incident wave--far-field pattern pair as
long as the scatterer is polyhedral. In \cite{BLpiecewise} the whole
acoustic potential is determined if it is known to be piecewise
constant on a given grid. A recent result of similar flavour
determines a potential from a finite-dimensional subspace by a finite
number of incident-wave--far-field pattern pairs or boundary Cauchy
data \cite{Alberti--Santacesaria}.

\medskip
The first theorem in this article is for the inverse source
problem. Any source that has a corner or edge singularity
scatters. This result is surprising in the sense that there are
nonradiating sources that are piecewise constant in one dimension
\cite{Gbur}. Similarly, $f = f_0 \chi_{B(0,r_0)}$ is nonradiating of
wavenumber $k$ in three dimensions if $kr_0$ is a zero of the first
order spherical Bessel function. Somehow corners radiate more than
finite curvature. The exact formulation is in \cref{thm1}.
\begin{theorem}
  Let $f=\chi_\Omega\varphi$ with $\varphi$ H\"older-continuous around
  a convex or non-convex corner or edge point $x_c\in\partial\Omega$
  that can be connected to infinity. If $(\Delta+k^2)u=f$ where $u$
  satisfies the Sommerfeld radiation condition but $u^\infty=0$ then
  $\varphi(x_c)=0$.
\end{theorem}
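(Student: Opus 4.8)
The plan is to localize at the corner, exploit the vanishing far field to kill the exterior Cauchy data, and then test the equation against a complex geometrical optics (CGO) solution that concentrates exponentially at the vertex. Place $x_c$ at the origin. Since $u^\infty=0$ and the corner can be joined to infinity, Rellich's lemma gives $u\equiv 0$ on the unbounded component of $\R^n\setminus\overline\Omega$; by $H^2$-regularity, $u$ and $\nabla u$ therefore vanish on the exterior portion of $\partial\Omega$ near $x_c$ and on the exterior part of a small sphere $\d B_h$ about the origin. Near the origin $\Omega$ is approximated by a cone $C$, and the H\"older hypothesis lets me write $\varphi=\varphi(x_c)+O(\abs{x}^\alpha)$.

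The energy identity is Green's formula on $B_h$ tested against a solution $v_\tau$ of $(\Delta+k^2)v_\tau=0$ (equivalently a harmonic $v_\tau$, with $k^2v_\tau$ treated as a controlled remainder $R_\tau$):
\[
\int_{B_h\cap\Omega} v_\tau\,\varphi\,dx=\int_{\d B_h}\big(v_\tau\,\d_\nu u-u\,\d_\nu v_\tau\big)\,dS+R_\tau .
\]
On $\d B_h\setminus\overline\Omega$ the surface integrand vanishes because $u$ and $\nabla u$ do; the whole strategy is to choose $v_\tau$ so that $\mathrm{Re}(\log v_\tau)\le -c\tau\abs{x}^{\beta}$ on $C\setminus\{0\}$ for some $c,\beta>0$, which makes $v_\tau$ exponentially small on the remaining piece $\d B_h\cap\Omega$ (where $\abs{x}\sim h$). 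Then the right-hand side is $O(e^{-c\tau h^{\beta}})$, while the left-hand side is dominated by the vertex contribution $\varphi(x_c)\int_{C}v_\tau\,dx$, the H\"older and cone-approximation errors being of strictly lower order.

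For a convex corner I take the classical linear phase $v_\tau=e^{\rho\cdot x}$ with $\rho\in\C^n$, $\rho\cdot\rho=0$, and $\mathrm{Re}\,\rho$ in the interior of the dual cone of $C$, so that $\mathrm{Re}(\rho\cdot x)\le -c\tau\abs{x}$ on $C$ (here $\beta=1$). Rescaling $y=\tau x$ gives $\int_{C}v_\tau\,dx=\tau^{-n}\int_{C}e^{\rho_0\cdot y}\,dy$ with a nonzero, explicitly computable constant, and the remainders are $O(\tau^{-n-\alpha})$. Multiplying by $\tau^{n}$ and letting $\tau\to\infty$ forces $\varphi(x_c)=0$.

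The genuinely new and hardest point is the non-convex case, where no linear phase is negative on the whole reflex cone. Identifying the corner plane with $\C$ via $z=x_1+ix_2$, I use the announced CGO whose logarithm is a branch of the square root, $v_\tau=\exp\!\big(-\tau e^{i\psi}\sqrt{z}\big)$, with the branch cut hidden inside the exterior cone where $u\equiv0$. Since $z\mapsto\sqrt z$ halves opening angles, a reflex sector of angle in $(\pi,2\pi)$ is sent to a sector of angle $<\pi$, on which one direction $e^{i\psi}$ makes $\mathrm{Re}(e^{i\psi}\sqrt z)\ge c\sqrt{\abs z}$; thus $\beta=\tfrac12$ and $\int_{C}v_\tau\,dx\asymp\tau^{-4}$, yet the boundary terms are $O(e^{-c\tau\sqrt h})$ and still beat every power of $\tau$, so the same limit yields $\varphi(x_c)=0$. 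The delicate checks are: the branch point is only an $H^1$ singularity, so the weak Green identity with $u\in H^2$ is justified; $v_\tau$ solves the Helmholtz equation up to a remainder that does not disturb the leading asymptotics; and $\mathrm{Re}(e^{i\psi}\sqrt z)$ keeps its sign uniformly up to the boundary of the convexified sector. In higher dimensions an edge is treated by freezing the tangential variable and running this planar construction in the transverse slice.
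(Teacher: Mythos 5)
Your two-dimensional argument is, in substance, the paper's own proof: Rellich plus unique continuation annihilates the exterior Cauchy data near $x_c$; Green's identity on $B_h\cap\Omega$ against the harmonic exponential whose logarithm is a branch of the square root (cut placed in the exterior cone, where $u\equiv 0$) produces a main term $\varphi(x_c)\int_{\mathcal C}v_\tau\,dx\asymp\tau^{-4}$, exponentially small terms on $\partial B_h\cap\Omega$, and H\"older remainders of strictly lower order; the excision argument at the branch point is the content of \cref{u0intByParts}, and the asymptotics are \cref{2DupperAndLower} (with $s=\tau^2$). Note that your case split is unnecessary: since the square-root phase decays on \emph{every} sector avoiding the cut, it covers convex and reflex corners alike, so the linear-phase construction for convex corners can be discarded.

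Two steps, however, would fail or mislead as written. First, the edge case: ``freezing the tangential variable and running this planar construction in the transverse slice'' does not work if it means restricting to slices $\{x_3=t\}$. The effective planar source on a slice is $f-k^2u-\partial_{x_3}^2u$, and $\partial_{x_3}^2u$ is merely $L^2$: it has no trace on a hyperplane, let alone H\"older continuity near the corner, so the two-dimensional proposition cannot be invoked sliceswise. The paper circumvents this with the operator $T_\xi g(x')=\int e^{-ix_3\xi}\phi(x_3)g(x',x_3)\,dx_3$, $\phi$ a smooth cutoff (\cref{dimensionReduction}): integrating by parts in $x_3$ converts $\partial_{x_3}^2u$ into terms involving $u$ alone, which \emph{are} H\"older in $x'$ by $H^2\hookrightarrow C^{1/2}$, and Fourier inversion in $\xi$ then recovers the vanishing of $\varphi$ on the edge. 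Second, your remainder $R_\tau=k^2\int v_\tau u\,dx$ is \emph{not} automatically of lower order: generically it contributes $k^2u(x_c)\int_{\mathcal C}v_\tau\,dx$, which is exactly the size $\tau^{-4}$ of the main term. It is harmless only because $u(x_c)=0$ --- which you do have, since $u$ vanishes on the exterior component and is continuous by Sobolev embedding --- so the limit actually yields $\varphi(x_c)-k^2u(x_c)=0$ and only then $\varphi(x_c)=0$; this must be said explicitly (the paper does so in \cref{thm1} by writing $\Delta u=f-k^2u$ and treating $f-k^2u$ as the H\"older source compared against the zero source). Relatedly, the claim $\int_{\mathcal C}v_\tau\,dx\asymp\tau^{-4}$ is asserted rather than proved: the integral equals $6i\big(e^{-2i\theta_M}-e^{-2i\theta_m}\big)s^{-2}$, which vanishes precisely when the opening angle $\theta_M-\theta_m$ is a multiple of $\pi$, so the contour computation of \cref{2DupperAndLower} and the standing assumption that the opening is not $\pi$ (i.e.\ $x_c$ is a genuine corner) are both needed to close the argument.
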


\medskip
A topic deeply tied to inverse scattering and nonradiation is the
interior transmission problem. The wavenumber $k$ is a transmission
eigenvalue for an acoustic potential if the latter does not scatter
some incident wave. The converse is more delicate \cite{BPS}. We refer
to the survey \cite{CH}. The transmission eigenvalue problem for a
potential $V\in L^\infty$ on a bounded domain $\Omega\subset\R^n$ asks
for $k>0$ and $v,w\in L^2(\Omega)$ such that
\begin{equation} \label{TE}
  \begin{cases}
    (\Delta+k^2)w=0 & \qquad\Omega, \\
    (\Delta+k^2(1+V))v=0 & \qquad\Omega, \\
    v-w \in H^2_0(\Omega), \quad \norm{w}_{L^2(\Omega)}=1. &
  \end{cases}
\end{equation}
If this has a solution then $k$ is called a \emph{transmission
  eigenvalue}, and the pair $(v,w)$ \emph{transmission
  eigenfunctions}.

The first result about intrinsic properties of the eigenfunctions
themselves that we know of was shown in
\cite{BLvanishing,BLvanishingNum} and the addendum
\cite{BLvanishingAdd}. These showed that transmission eigenfunctions
vanish at corners if they are $H^2$-smooth nearby.  Using the new
methods presented here for the inverse source problem we can
generalise the geometric setting and the a-priori assumptions on the
eigenfunctions and potentials in \cite{BLvanishing}. The full
statement and proof is in \cref{vanishingThm}.
\begin{theorem}
  Let $(v,w)\in L^2(\Omega)\times L^2(\Omega)$ be transmission
  eigenfunctions for the wavenumber $k>0$ and acoustic potential
  $V$. If $x_c\in\partial\Omega$ is a corner point or edge around
  which $V$ is H\"older continuous and $v,w$ are $H^2$-smooth, then
  $v(x_c)=w(x_c)=0$.
\end{theorem}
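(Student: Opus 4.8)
The plan is to reduce the transmission-eigenfunction statement to the nonradiating-source theorem \cref{thm1} already established. Set $u := v - w$. The transmission conditions give $u \in H^2_0(\Omega)$, and subtracting the two Helmholtz equations yields, in $\Omega$,
\[
  (\Delta + k^2)u = (\Delta+k^2)v - (\Delta+k^2)w = -k^2 V v,
\]
since $(\Delta+k^2)w = 0$ and $(\Delta + k^2(1+V))v = 0$. First I would promote this to an identity on all of $\R^n$: because $u \in H^2_0(\Omega)$, its extension by zero $\tilde u$ lies in $H^2(\R^n)$, and integrating by parts twice (the boundary terms vanish since $u$ and $\nabla u$ have zero trace on the Lipschitz boundary $\partial\Omega$, corners and edges being Lipschitz singularities) shows $(\Delta + k^2)\tilde u = f$ distributionally, where $f = \chi_\Omega\varphi$ with $\varphi = -k^2 V v$.

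Next I would observe that $\tilde u$ is compactly supported. It therefore satisfies the Sommerfeld radiation condition trivially — $r^{(n-1)/2}(\partial_r - ik)\tilde u$ vanishes identically outside a large ball — and its far-field pattern is $u^\infty = 0$. Hence $(\tilde u, f)$ is exactly an admissible pair for \cref{thm1}, i.e.\ $f$ is a nonradiating source. To invoke that theorem I must verify that $\varphi$ is H\"older continuous near $x_c$: in the relevant dimensions $n\in\{2,3\}$ the Sobolev embedding $H^2 \hookrightarrow C^{0,\alpha}$ makes $v$ H\"older continuous near $x_c$ — this is precisely where the $H^2$-smoothness hypothesis enters — and $V$ is H\"older by assumption, so the product $\varphi$ is H\"older. \cref{thm1} then gives $\varphi(x_c) = 0$, that is, $V(x_c)\,v(x_c) = 0$.

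Finally I would extract both pointwise conclusions. Provided the contrast does not degenerate at the corner, i.e.\ $V(x_c)\neq 0$ — which I expect to be part of the precise a-priori hypotheses of \cref{vanishingThm}, and which is indispensable — the identity above forces $v(x_c) = 0$. For $w$, note that $\tilde u \in H^2(\R^n) \hookrightarrow C^{0,\alpha}$ is continuous and vanishes identically on $\R^n\setminus\Omega$; since a corner or edge point is approached from the exterior, continuity gives $(v-w)(x_c) = \tilde u(x_c) = 0$, whence $w(x_c) = v(x_c) = 0$.

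The reduction itself is soft; the genuine analytic content sits entirely inside \cref{thm1} (the enclosure-method energy identity and the new square-root-logarithm CGO solutions that handle non-convex corners). The main obstacle I anticipate within the reduction is bookkeeping the regularity so that $\varphi = -k^2 V v$ is genuinely admissible for \cref{thm1}, together with isolating the role of the contrast: the source theorem only detects the product $Vv$, so passing from $V(x_c)\,v(x_c) = 0$ to $v(x_c) = 0$ is exactly the step where the non-vanishing of $V$ at $x_c$ is essential.
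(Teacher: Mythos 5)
Your reduction captures the right mechanism, and in substance it is the paper's own argument: the paper also exploits that $v-w\in H^2_0(\Omega)$ gives matching Cauchy data on $\partial\Omega$ near $x_c$, feeds the eigenfunctions as sources into the local corner/edge results, uses continuity of the $H^2_0$ difference to get $v(x_c)=w(x_c)$, and needs exactly the hypothesis $V(x_c)\neq 0$ that you correctly anticipated (it is indeed part of \cref{vanishingThm}). The packaging differs: the paper never forms a radiating wave at all. It applies \cref{wideCornerSourceValue} (2D) and \cref{wideEdgeSourceValue} (3D) directly to the pair $u=v$, $u'=w$ with sources $\Delta v=-k^2(1+V)v$ and $\Delta w=-k^2 w$, both H\"older near $x_c$ by the Sobolev embedding $H^2\hookrightarrow C^{1/2}$; the propositions give $(1+V(x_c))v(x_c)=w(x_c)$, and $v(x_c)=w(x_c)$ then forces $V(x_c)v(x_c)=0$.

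The gap is in your black-box invocation of \cref{thm1}, whose hypotheses you cannot verify here. First, \cref{thm1} requires $\varphi\in L^\infty(\R^n)$, but your $\varphi=-k^2Vv$ is only in $L^2(\Omega)$: in \cref{vanishingThm} the eigenfunctions are merely $L^2(\Omega)$ globally, and $H^2$ (hence H\"older) only near $x_c$. Second, \cref{thm1} assumes a chain of balls in $\R^n\setminus\Omega$ connecting the corner or edge to infinity; \cref{vanishingThm} makes no such assumption, and it can genuinely fail — e.g.\ for the corners of a square hole, $\Omega=B\setminus\overline{Q}$ — so your argument, as written, proves a strictly weaker statement. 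Both of these hypotheses enter the proof of \cref{thm1} only to produce, via Rellich's theorem and unique continuation, the vanishing of $u$ and $\partial_\nu u$ on $\partial\Omega$ near the corner. Your $\tilde u$ satisfies that conclusion trivially, being the zero extension of an $H^2_0(\Omega)$ function. So the repair is to bypass \cref{thm1} and apply \cref{wideCornerSourceValue} and \cref{wideEdgeSourceValue} directly to the pair $(\tilde u,0)$ with sources $(-k^2Vv-k^2\tilde u,\,0)$, which are H\"older near $x_c$; this yields $V(x_c)v(x_c)+\tilde u(x_c)=0$, and $\tilde u(x_c)=0$ by continuity, giving the claim. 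With that one-line substitution your proof is complete and coincides, up to the change of variables $u=v-w$, with the paper's.
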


\medskip
Our final result is on shape determination. A single far-field
measurement gives a function of $n-1$ independent variables, so maybe
the shape of the source could be recovered?  This is known as
Schiffer's problem \cite{CK}: does $u^\infty$ determines $\Omega$ for
a fixed $k$? The exact conditions under which this happens is an open
problem. If $f=0$ then $u^\infty=0$. However there are
\emph{nonradiating sources}. For example consider $v \in
H^2_0(\Omega)$ and let $f = (\Delta+k^2)v$. Then $u=v$ is the wave
produced by $f$ and $u^\infty=0$. What is recoverable in general is
the \emph{scattering support}: see \cite{Kusiak--Sylvester,
  SylvesterIP} and the references therein. Despite the lack of
uniqueness for general sources, there are numerical algorithms,
e.g. \cite{InverseSourceNUM1, InverseSourceNUM2} for recovering
sources that can be approximated by well-separated point-sources. An
even earlier result by \cite{Ikehata} recovers the convex hull of a
polygonal source from a single measurement. The corner scattering
methods used to prove \cref{thm1,vanishingThm} can also show the
theorem below.
\begin{theorem}
  Let $f=\chi_\Omega\varphi$, $f'=\chi_{\Omega'}\varphi'$ with
  $\Omega,\Omega'$ convex polyhedra and $\varphi,\varphi'$ bounded
  functions that are H\"older continuous near the corners and possibly
  edges of $\Omega,\Omega'$, respectively. If $(\Delta+k^2)u=f$,
  $(\Delta+k^2)u'=f'$ and $u^\infty={u'}^\infty$, then
  $\Omega=\Omega'$ and $\varphi=\varphi'$ on the corners and the edges
  where they are H\"older-continuous.
\end{theorem}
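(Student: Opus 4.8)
The plan is to reduce the two--source problem to a single nonradiating source by subtraction and then, whenever the shapes disagree, to locate an outermost corner belonging to only one of them, at which \cref{thm1} forces the amplitude to vanish. Concretely, set $w=u-u'$. Then $(\Delta+k^2)w=f-f'=:g$ with $g=\chi_\Omega\varphi-\chi_{\Omega'}\varphi'$ supported in $\overline\Omega\cup\overline{\Omega'}$, $w$ satisfies the Sommerfeld radiation condition by linearity, and its far-field pattern is $w^\infty=u^\infty-{u'}^\infty=0$. Thus $w$ is a nonradiating wave with source $g$, and \cref{thm1} is available at every corner or edge of $\partial(\supp g)$ that can be connected to infinity.

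To prove $\Omega=\Omega'$ I would argue by contradiction using support functions. Writing $h_\Omega(\theta)=\sup_{x\in\Omega}x\cdot\theta$ for $\theta\in\mathbb{S}^{n-1}$, the convex bodies $\overline\Omega,\overline{\Omega'}$ coincide iff $h_\Omega\equiv h_{\Omega'}$, so if $\Omega\neq\Omega'$ there is a direction $\theta_0$ with, say, $h_\Omega(\theta_0)>h_{\Omega'}(\theta_0)$. Since this is an open condition and $\Omega$ is a polyhedron, I can perturb $\theta_0$ to a generic direction $\theta$ whose supporting hyperplane meets $\overline\Omega$ in a single vertex $x_c$ while still $h_\Omega(\theta)>h_{\Omega'}(\theta)$. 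Then $x_c\cdot\theta=h_\Omega(\theta)>h_{\Omega'}(\theta)\geq z\cdot\theta$ for every $z\in\overline{\Omega'}$, so $x_c\notin\overline{\Omega'}$; choosing $\delta>0$ with $B(x_c,\delta)\cap\overline{\Omega'}=\emptyset$ gives $g=\chi_\Omega\varphi$ on $B(x_c,\delta)$, i.e. a genuine convex corner of $\supp g$ with H\"older amplitude $\varphi$. Moreover the ray $x_c+t\theta$, $t>0$, satisfies $(x_c+t\theta)\cdot\theta=h_\Omega(\theta)+t$ and hence avoids both $\overline\Omega$ and $\overline{\Omega'}$, so $x_c$ can be connected to infinity through the complement of $\supp g$. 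Now \cref{thm1} yields $\varphi(x_c)=0$; as this holds at the exposed vertex in every separating direction, under the identification of $\Omega,\Omega'$ with the supports of $f,f'$ it contradicts $x_c$ being a boundary vertex where the source is active. By symmetry the same argument applies with the roles of $\Omega,\Omega'$ reversed, and therefore $\Omega=\Omega'$.

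With the shapes identified, the amplitudes are compared at each corner and edge in the same way. Near any corner or edge point $x_c$ of the now-common polyhedron $\Omega$, both indicator functions share the same local cone, so $g=\chi_\Omega(\varphi-\varphi')$ there, with $\varphi-\varphi'$ H\"older continuous at $x_c$; and since $x_c$ is a corner or edge of a convex polyhedron its exterior reaches infinity, so $x_c$ can be connected to infinity and \cref{thm1} applies directly, giving $(\varphi-\varphi')(x_c)=0$, i.e. $\varphi(x_c)=\varphi'(x_c)$.

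The genuinely delicate point is not the subtraction but verifying that, at the vertex produced by the support-function argument, all hypotheses of \cref{thm1} hold at once: that $x_c$ is a true corner (or, in $\R^3$, lies on an edge) of $\supp g$ rather than an artifact of cancellation between $f$ and $f'$, that the local amplitude remains H\"older there, and that $x_c$ can be connected to infinity. The separation $B(x_c,\delta)\cap\overline{\Omega'}=\emptyset$ is exactly what makes all three hold simultaneously, reducing $g$ to $\chi_\Omega\varphi$ near $x_c$, while the supporting ray supplies the path to infinity. The three--dimensional edge case is handled identically, perturbing $\theta$ so that the supporting hyperplane meets $\overline\Omega$ in an edge lying outside $\overline{\Omega'}$ and invoking the edge version of \cref{thm1}.
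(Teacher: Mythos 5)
Your argument is correct and reaches the same conclusions, but it is organized differently from the paper's proof. The paper never forms the difference source: after Rellich's lemma and unique continuation give $u=u'$ outside $\Omega\cup\Omega'$, it feeds the \emph{pair} $(u,u')$, viewed as solutions of $\Delta u=\varphi-k^2u$ and $\Delta u'=-k^2u'$ (and later $\Delta u'=\varphi'-k^2u'$), directly into \cref{wideCornerSourceValue,wideEdgeSourceValue}, which are already stated for two functions with matching Cauchy data near the corner; the geometric step is the one-line convexity observation that $\Omega\not\subset\Omega'$ yields a corner (2D) or edge (3D) point $x_c\in\partial\Omega\setminus\overline{\Omega'}$. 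You instead subtract, treat $w=u-u'$ as a nonradiating wave with source $g=f-f'$, and invoke \cref{thm1} as a black box at an exposed vertex; your support-function and ray construction is a rigorous expansion of the paper's convexity claim and simultaneously supplies the chain-of-balls condition of \cref{thm1}. What your route buys is a cleaner logical structure, since \cref{thm2} becomes a corollary of \cref{thm1}; what it costs is the need to verify that $g$ is an admissible single source, and two points there deserve care: first, $\Omega\cup\Omega'$ need not be connected, so ``bounded domain'' in \cref{thm1} must be read loosely (harmless, because the proof of \cref{thm1} only uses the local cone structure at the corner and the connection to infinity); second, in three dimensions the corner machinery (\cref{wideEdgeSourceValue}) applies at edge points, not at vertices, so the contradiction must be run at edge points near the exposed vertex --- you flag this at the end, and the paper does the same (``an edge point close to a vertex''). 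Finally, note that the statement as quoted omits a hypothesis that is present in the paper's exact formulation \cref{thm2}, namely that $\varphi,\varphi'$ are nonzero at the vertices; your contradiction step (``the source is active'') tacitly uses it, and it is genuinely needed, since otherwise $\varphi\equiv\varphi'\equiv0$ with $\Omega\neq\Omega'$ would be a counterexample to shape determination.
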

As it turns out the two-dimensional case of this result is equivalent
to what was shown in \cite{Ikehata}. Its exact formulation is
\cref{thm2}. More comparison is done in \cref{IkehataComparison}.

\medskip
In contrast with the fairly involved series expansion method for
corner scattering by \cite{EH} we introduce a new type of
\emph{complex geometrical optics solution} and use it in energy- or
orthogonality identities. The original CGO solutions \cite{Calderon,
  Sylvester--Uhlmann} were used for solving the Calder\'on problem, or
the inverse potential scattering problem for a fixed wavenumber $k$
with infinitely many incident-wave--far-field pairs. Their principal
part is an exponential of a complex linear function. The
two-dimensional case, at the time still unsolved, required the use of
a new type of solutions where the logarithm of the principal part is a
second degree polynomial \cite{Bukhgeim}. These are all so-called
\emph{limiting Carleman weights} all of which were classified in
\cite{LCW1,LCW2}.

In the single measurement setting, the enclosure method \cite{Ikehata}
and the original corner scattering argument and following papers
\cite{BPS, HSV, PSV, BLstability, BLpiecewise, BLvanishing} always
used the linear phase form of the CGO solutions which decay in a
half-space. A limitation of that argument is that only convex corners
could be considered unlike in the fairly involved analysis of
\cite{EH}. The Bukhgeim solutions of \cite{Bukhgeim} would make the
situation even worse in two dimensions: only corners smaller than
$\pi/2$ could be dealt with. Here we remove this limitation. By
choosing a suitable branch of $\sqrt{x_1+ix_2}$ we note that
\[
u_0 = \exp(\sqrt{s}\sqrt{x_1+ix_2}), \qquad s\in\R_+
\]
is harmonic and decays exponentially in all directions except for the
lone branch cut. This is useful in higher dimensions too by a
dimension reduction argument. Methods from \cite{Bukhgeim, BTW} likely
extend this function to a solution
\[
u_0 = \exp(\sqrt{s}\sqrt{x_1+ix_2})(1+r(x))
\]
of the equation with potential, $(\Delta+q)u_0=0$, but we only need
the harmonic version in this paper, so skip the full Neumann series
construction.

\section{A harmonic function decaying almost everywhere}
In this section we construct a new type of test function that's
harmonic and decays exponentially in all directions except one. An
integral identity, \cref{u0intByParts}, allows us to use it for corner
scattering instead of the complex geometrical optics solutions. It is
already known that more general solutions can be used in inverse
problems, e.g. by having more general \emph{limiting Carleman weights}
than the linear $x\mapsto\rho\cdot x$, see e.g. \cite{LCW1,LCW2}, and
also a complex analytic phase function in \cite{Bukhgeim}. By having
an exponential that decays in all directions except one we can
consider any angle in corner scattering, including non-convex
ones. The construction is done by taking a non-principal branch of
$\exp((x_1+ix_2)^a)$ where $0<a<1$. Let us study its properties.

\begin{lemma}\label{isAnalytic}
  Let $a\in\R$ be a given exponent, $z=x_1+ix_2\in\C$ the variable,
  and $m\in\Z$ a branch number. Define the complex exponential $\exp
  z^a$ by
  \[
  u_0^{a,m}(z) := \exp\left( \abs{z}^a \big(\cos a(\arg z + 2\pi m) +
  i \sin a(\arg z + 2\pi m)\big) \right).
  \]
  Then it is complex analytic in $\C\setminus(\R_-\cup\{0\})$ if the
  complex argument is defined as $-\pi < \arg z \leq \pi$.
\end{lemma}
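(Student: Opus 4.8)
The plan is to recognize $u_0^{a,m}$ as a composition of holomorphic maps, so that analyticity follows from the chain rule rather than from a direct Cauchy--Riemann computation on the full expression. The key observation is that the argument of the outer exponential is exactly a branch of $z^a$. Concretely, set $L_m(z) := \ln\abs{z} + i(\arg z + 2\pi m)$ on $\C\setminus(\R_-\cup\{0\})$, where $-\pi<\arg z<\pi$ on this slit plane. Expanding $\exp(aL_m(z)) = \exp(a\ln\abs{z})\exp\!\big(ia(\arg z+2\pi m)\big)$ and applying Euler's formula shows that its real and imaginary parts are precisely $\abs{z}^a\cos a(\arg z+2\pi m)$ and $\abs{z}^a\sin a(\arg z+2\pi m)$. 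Hence
\[
u_0^{a,m}(z) = \exp\!\big(\exp(aL_m(z))\big),
\]
and it remains to show that $L_m$ is holomorphic on the slit plane.

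For that I would note that $L_m = \log + 2\pi i m$ differs only by the additive constant $2\pi i m$ from the principal branch of the logarithm $\log z = \ln\abs{z} + i\arg z$, which is holomorphic on $\C\setminus(\R_-\cup\{0\})$: the removed ray $\R_-$ is exactly where $\arg z$ (restricted to $(-\pi,\pi]$) is discontinuous, and on the complement it is the real-analytic imaginary part of $\log z$. If one prefers a self-contained verification, writing $u=\ln\abs{z}=\tfrac12\ln(x_1^2+x_2^2)$ and $v=\arg z$ one checks the Cauchy--Riemann equations directly from $\d_{x_1}u=\d_{x_2}v=x_1/\abs{z}^2$ and $\d_{x_2}u=-\d_{x_1}v=x_2/\abs{z}^2$, the partials being continuous away from the slit.

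With $L_m$ holomorphic the chain of compositions closes immediately: $aL_m$ is holomorphic, $\exp(aL_m)=z^a$ is holomorphic since $\exp$ is entire, and a final composition with the entire function $\exp$ yields analyticity of $u_0^{a,m}$ on $\C\setminus(\R_-\cup\{0\})$.

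I do not expect a genuine obstacle here; the content is the bookkeeping of branches rather than any analytic difficulty. The one point meriting care is matching the real trigonometric form in the statement to the branch $\exp(aL_m)$, and confirming that the stated convention $-\pi<\arg z\le\pi$ places the branch cut exactly on the deleted set $\R_-\cup\{0\}$, so that the identification of $\arg z$ with the imaginary part of $\log z$ is valid throughout the domain of holomorphy.
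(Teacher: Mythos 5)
Your proposal is correct and follows essentially the same route as the paper: both identify $u_0^{a,m}$ as $\exp(\exp(a\log_m(z)))$ with $\log_m(z)=\ln\abs{z}+i(\arg z+2\pi m)$ the $m$'th branch of the logarithm, observe that $\log_m$ is holomorphic off the branch cut $\R_-\cup\{0\}$, and conclude by composition with the entire exponential. Your added Euler-formula bookkeeping and the optional Cauchy--Riemann check simply make explicit what the paper leaves implicit.
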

\begin{proof}
  The function $z\mapsto z^a = \exp(a \log_m(z))$ is analytic in the
  domain of analyticity of $\log_m(z)$, the $m$'th branch of the
  complex logarithm, which is given by
  \[
  \log_m(z) = \ln\abs{z} + i(\arg z + 2\pi m).
  \]
  The branch cut of $\arg z$ is $\R_-\cup\{0\}$ so $\log_m(z)$ is
  analytic in $\C\setminus(\R_-\cup\{0\})$. Then
  \[
  u_0^{a,m}(z) = \exp\exp( a \log_m(z) )
  \]
  is complex analytic in that same set.
\end{proof}

\begin{lemma}\label{decaySector}
  Let $a=1/2$ and $m=1$ for $u_0^{a,m}$ from \cref{isAnalytic}.  For
  $x\in\R^2$ denote $r=\abs{x}$, $\theta=\arg(x_1+ix_2)$. We define
  \begin{equation}\label{2Du0Def}
    u_0(x) := u_0^{a,m}(x_1+ix_2) = \exp\big( \sqrt{r} \big( \cos
    (\tfrac{\theta}{2} + \pi) + i \sin (\tfrac{\theta}{2} + \pi)
    \big)\big).
  \end{equation}
  Then $\Delta u_0 = 0$ in $\R^2 \setminus ({\R_-\times\{0\}}
  \cup\{(0,0)\})$, and $s\mapsto u_0(s x)$ decays exponentially in
  $\R_+$ whenever $x$ is in that same domain of harmonicity.
\end{lemma}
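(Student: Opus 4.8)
The plan is to derive both assertions directly from \cref{isAnalytic} together with elementary trigonometry, since the real content of the lemma lies in the \emph{design} of the function rather than in any hard estimate. For the harmonicity I would first observe that \eqref{2Du0Def} is literally the restriction of a holomorphic function to the plane: substituting $a=\tfrac{1}{2}$ and $m=1$ into the defining formula of \cref{isAnalytic} gives $a(\arg z + 2\pi m) = \tfrac{\theta}{2}+\pi$ and $\abs{z}^a = \sqrt{r}$, so $u_0(x) = u_0^{1/2,1}(x_1+ix_2)$ term by term. Since \cref{isAnalytic} guarantees that $u_0^{1/2,1}$ is complex analytic on $\C\setminus(\R_-\cup\{0\})$, and any holomorphic $F$ satisfies $\Delta F = 4\partial_z\partial_{\bar z}F = 0$ because $\partial_{\bar z}F=0$, I conclude $\Delta u_0 = 0$ on the image domain $\R^2\setminus(\R_-\times\{0\}\cup\{(0,0)\})$. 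This first half therefore costs nothing beyond identifying $u_0$ with an analytic function.

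For the decay I would fix $x$ in the domain of harmonicity and write $r=\abs{x}$ and $\theta = \arg(x_1+ix_2)\in(-\pi,\pi)$. For $s\in\R_+$ the point $sx$ has modulus $sr$ and the same argument $\theta$, so \eqref{2Du0Def} yields $u_0(sx) = \exp\big(\sqrt{sr}\,(\cos(\tfrac{\theta}{2}+\pi) + i\sin(\tfrac{\theta}{2}+\pi))\big)$. Taking absolute values removes the imaginary part of the exponent and leaves $\abs{u_0(sx)} = \exp\big(\sqrt{sr}\,\cos(\tfrac{\theta}{2}+\pi)\big)$, so the whole lemma reduces to controlling the sign of $\cos(\tfrac{\theta}{2}+\pi)$.

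The key observation, and the only place where the specific branch $m=1$ and exponent $a=\tfrac{1}{2}$ matter, is that as $\theta$ ranges over $(-\pi,\pi)$ the shifted angle $\tfrac{\theta}{2}+\pi$ ranges over $(\tfrac{\pi}{2},\tfrac{3\pi}{2})$, precisely the open arc on which cosine is strictly negative. Hence $c := -\sqrt{r}\,\cos(\tfrac{\theta}{2}+\pi) > 0$ for every admissible $x$, and $\abs{u_0(sx)} = \exp(-c\sqrt{s}) \to 0$ as $s\to\infty$, which is the claimed exponential decay. I expect no genuine obstacle here. The one point worth flagging is the behaviour at the edges $\theta\to\pm\pi$: there $\tfrac{\theta}{2}+\pi\to\tfrac{\pi}{2}$ or $\tfrac{3\pi}{2}$, so $c\to0$ and the decay degenerates exactly on the branch cut. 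This is consistent with, and explains, the exclusion of $\R_-\times\{0\}$ from the domain, and it is the geometric feature that will later let us treat non-convex corners by orienting the branch cut away from the sector of interest.
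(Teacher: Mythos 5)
Your proposal is correct and follows essentially the same route as the paper's proof: both derive the formula and harmonicity from \cref{isAnalytic} (you merely spell out the Wirtinger-derivative step $\Delta F = 4\partial_z\partial_{\bar z}F$ that the paper leaves implicit), then take the modulus $\abs{u_0(sx)} = \exp\big(\sqrt{sr}\cos(\tfrac{\theta}{2}+\pi)\big)$ and observe that $\tfrac{\theta}{2}+\pi$ lies in $\big(\tfrac{\pi}{2},\tfrac{3\pi}{2}\big)$, where the cosine is strictly negative. Your closing remark about the decay degenerating as $\theta\to\pm\pi$ is a nice observation beyond the paper's proof, and correctly anticipates why the branch cut must be excluded.
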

\begin{proof}
  The formula and domain of analyticity, which implies harmonicity,
  follow directly from \cref{isAnalytic}. Then
  \[
  \abs{u_0(sx)} = \exp\big( \sqrt{s}\sqrt{r}
  \cos(\tfrac{\theta}{2}+\pi) \big).
  \]
  For $x$ in the domain of harmonicity we have $\tfrac{\theta}{2}+\pi
  \in \tfrac{1}{2}{]{-\pi,\pi}[}+\pi$. This implies that the cosine is
  evaluated in ${]{\tfrac{\pi}{2},\tfrac{3\pi}{2}}[}$ where it is
  negative.
\end{proof}

\begin{proposition}\label{2DupperAndLower}
  Let $u_0:\R^2\to\C$ from \cref{decaySector} and consider the open
  sector
  \[
  \mathcal C = \{x\in\R^2 \mid x\neq0,\, \theta_m < \arg(x_1+ix_2) <
  \theta_M \}
  \]
  for angles $-\pi < \theta_m < \theta_M < \pi$.

  Let $\alpha,s>0$. Then
  \begin{equation}\label{2Dupper}
    \int_{\mathcal C} \abs{u_0(sx)} \abs{x}^\alpha dx \leq
    \frac{2(\theta_M-\theta_m) \Gamma(2\alpha+4)}{\delta_{\mathcal
        C}^{2\alpha+4}} s^{-\alpha-2}
  \end{equation}
  where $\delta_{\mathcal C} = -\max_{\theta_m<\theta<\theta_M}
  \cos(\theta/2+\pi) > 0$. Moreover
  \begin{equation}\label{2Dlower}
    \int_{\mathcal C} u_0(sx) dx = 6i ( e^{{-2}{\theta_M}i} -
    e^{{-2}{\theta_m}i} ) s^{-2}
  \end{equation}
  and for $h>0$
  \begin{equation}\label{2DfarFromCorner}
    \int_{\mathcal C \setminus B(0,h)} \abs{u_0(sx)} dx \leq
    \frac{6(\theta_M-\theta_m)}{\delta_{\mathcal C}^4}
    s^{-2} e^{-\delta_{\mathcal C} \sqrt{h s}/2}.
  \end{equation}
\end{proposition}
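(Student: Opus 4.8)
The plan is to pass to polar coordinates $x=(r\cos\theta,r\sin\theta)$, in which $dx=r\,dr\,d\theta$ and, by \cref{decaySector}, $\abs{u_0(sx)}=\exp\!\big(\sqrt{sr}\,\cos(\tfrac{\theta}{2}+\pi)\big)=\exp\!\big(-\sqrt{sr}\,\beta(\theta)\big)$ with $\beta(\theta):=-\cos(\tfrac{\theta}{2}+\pi)$. On the sector $\mathcal C$ the angle $\tfrac{\theta}{2}+\pi$ lies in $]\tfrac{\pi}{2},\tfrac{3\pi}{2}[$, so $\beta(\theta)>0$, and by the definition of $\delta_{\mathcal C}$ one has $\beta(\theta)\ge\delta_{\mathcal C}>0$ throughout $\mathcal C$. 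All three estimates then reduce, after the substitution $t=\sqrt{sr}$ (so $r=t^2/s$ and $r\,dr=2t^3 s^{-2}\,dt$), to one-dimensional Gamma integrals in $t$.

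For \eqref{2Dupper} I would write the integral as $\int_{\theta_m}^{\theta_M}\!\int_0^\infty e^{-\sqrt{sr}\beta(\theta)}r^{\alpha+1}\,dr\,d\theta$ and apply the substitution to turn the inner integral into $2s^{-\alpha-2}\int_0^\infty e^{-\beta(\theta)t}t^{2\alpha+3}\,dt=2s^{-\alpha-2}\Gamma(2\alpha+4)\beta(\theta)^{-2\alpha-4}$. Bounding $\beta(\theta)^{-2\alpha-4}\le\delta_{\mathcal C}^{-2\alpha-4}$ and integrating the resulting constant over $\theta\in\,]\theta_m,\theta_M[$ produces exactly the factor $2(\theta_M-\theta_m)\Gamma(2\alpha+4)\delta_{\mathcal C}^{-2\alpha-4}$, which is \eqref{2Dupper}.

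For the exact identity \eqref{2Dlower} I would \emph{not} take absolute values but keep the full complex exponent $w(\theta):=e^{i(\theta/2+\pi)}=\cos(\tfrac{\theta}{2}+\pi)+i\sin(\tfrac{\theta}{2}+\pi)$, whose real part equals $-\beta(\theta)<0$ on $\mathcal C$; this negativity is precisely what guarantees convergence of the radial integral. The same substitution reduces the radial part to $2s^{-2}\int_0^\infty e^{w(\theta)t}t^3\,dt=2s^{-2}\,\Gamma(4)\,(-w(\theta))^{-4}=12\,s^{-2}w(\theta)^{-4}$. Since $w(\theta)^4=e^{2i\theta+4\pi i}=e^{2i\theta}$, the remaining angular integral is the elementary $12s^{-2}\int_{\theta_m}^{\theta_M}e^{-2i\theta}\,d\theta$, and evaluating it yields $6i\,(e^{-2\theta_M i}-e^{-2\theta_m i})\,s^{-2}$, establishing \eqref{2Dlower}.

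The main work is \eqref{2DfarFromCorner}, where I must extract the exponential gain $\exp(-\delta_{\mathcal C}\sqrt{hs}/2)$ on top of the power of $s$. The idea is to split the decay: for $r\ge h$ one has $\sqrt{sr}\ge\tfrac12\sqrt{sr}+\tfrac12\sqrt{sh}$, together with $\beta(\theta)\ge\delta_{\mathcal C}$, so that
\[
e^{-\sqrt{sr}\beta(\theta)}\ \le\ e^{-\delta_{\mathcal C}\sqrt{sh}/2}\,e^{-\frac12\sqrt{sr}\beta(\theta)}.
\]
Factoring out the first, $h$-dependent exponential and extending the surviving radial integral back to $[0,\infty)$, the substitution $t=\sqrt{sr}$ once more leaves a Gamma integral $\int_0^\infty e^{-\frac12\beta(\theta)t}t^3\,dt$; bounding $\beta(\theta)\ge\delta_{\mathcal C}$ and integrating over $\theta$ yields the stated form. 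The delicate point, and the one I would verify most carefully, is the bookkeeping of the numerical prefactor: a naive halving of the exponent inflates the constant, so obtaining the clean $6\,\delta_{\mathcal C}^{-4}$ requires either optimizing how much of the exponent is spent on the $h$-dependent gain versus the surviving integral, or estimating the upper incomplete Gamma integral $\int_{\sqrt{sh}}^\infty e^{-\beta t}t^3\,dt$ directly rather than through the crude split.
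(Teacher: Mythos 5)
Your proposal follows essentially the same route as the paper on all three claims: polar coordinates, the substitution $t=\sqrt{sr}$, and reduction to Gamma integrals, keeping the complex exponent for \eqref{2Dlower} and using $w(\theta)^4=e^{2i\theta}$ exactly as the paper does. The one small difference in \eqref{2Dlower} is that you invoke the identity $\int_0^\infty e^{-\lambda t}t^3\,dt=6\lambda^{-4}$ for complex $\lambda$ with $\Re\lambda>0$ as a known fact, whereas the paper proves it by an explicit contour deformation (rotating the ray $\arg z=\arg\lambda$ onto the positive real axis and checking that the arc contributions vanish); a complete write-up should include that justification, but the fact itself is classical.

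Concerning the point you flag in \eqref{2DfarFromCorner}: you can stop trying to reach the constant $6$, because it cannot be reached by any argument. Your split $\sqrt{sr}\ge\tfrac12\sqrt{sr}+\tfrac12\sqrt{sh}$ yields the prefactor $2\cdot 2^4\Gamma(4)=192$, and the paper's own proof, which instead bounds the incomplete Gamma function by $\Gamma(\beta,d)\le 2^\beta\Gamma(\beta)e^{-d/2}$ with $\beta=4$, $d=\delta_{\mathcal C}\sqrt{sh}$, produces exactly the same $192$. In fact the inequality as printed is false for small $h$: as $h\to0$ the left-hand side increases to $\int_{\mathcal C}\abs{u_0(sx)}\,dx = 12\,s^{-2}\int_{\theta_m}^{\theta_M}\cos(\theta/2)^{-4}\,d\theta\ \ge\ 12(\theta_M-\theta_m)s^{-2}$, while the right-hand side tends to $6(\theta_M-\theta_m)\delta_{\mathcal C}^{-4}s^{-2}$, which is strictly smaller whenever $\delta_{\mathcal C}$ is close to $1$ (e.g.\ a thin sector about $\theta=0$). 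So the ``$6$'' is a bookkeeping slip in the statement, not a target you failed to hit. It is also harmless: in the only place \eqref{2DfarFromCorner} is used, the proof of \cref{wideCornerSourceValue}, all that matters is a bound of the form $C(\mathcal C)\,s^{-2}e^{-\delta_{\mathcal C}\sqrt{hs}/2}$, which both your split-exponent argument and the paper's incomplete-Gamma estimate deliver.
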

\begin{proof}
  By \cref{decaySector} and $dx = r dr d\theta$
  \begin{align*}
    &\int_{\mathcal C} \abs{u_0(sx)} \abs{x}^\alpha dx =
    \int_{\theta_m}^{\theta_M} \int_0^\infty \exp \big( \sqrt{sr}
    \cos(\tfrac{\theta}{2}+\pi) \big) r^{\alpha+1} dr d\theta
    \\ &\qquad \leq \int_{\theta_m}^{\theta_M} d\theta \int_0^\infty
    \exp({-\delta_{\mathcal C}} \sqrt{sr}) r^{\alpha+1} dr
  \end{align*}
  where $\delta_{\mathcal C} > 0$. Change variables $t =
  \delta_{\mathcal C} \sqrt{sr}$ so $r = (t/\delta_{\mathcal C})^2
  s^{-1}$ and $dr = 2t dt/(\delta_{\mathcal C}^2 s)$. Recall the
  definition $\Gamma(\beta) = \int_0^\infty \exp({-t}) t^{\beta-1} dt$
  and the first claim follows. In the third claim we have instead
  $\alpha=0$ and the incomplete gamma function $\Gamma(\beta,d) =
  \int_d^\infty t^{\beta-1} \exp(-t) dt$ which we bound by
  \[
  \Gamma(\beta,d) \leq e^{-d/2} \int_d^\infty t^{\beta-1} e^{-t/2} dt
  = e^{-d/2} 2^\beta \int_{d/2}^\infty s^{\beta-1} e^{-s} ds \leq
  2^\beta \Gamma(\beta) e^{-d/2}
  \]
  and this implies the estimate.

  For the second claim we start similarly,
  \[
  \int_{\mathcal C} u_0(sx) dx = \int_{\theta_m}^{\theta_M}
  \int_0^\infty \exp\big( \sqrt{sr} \big( \cos(\tfrac{\theta}{2}+\pi)
  + i \sin(\tfrac{\theta}{2}+\pi) \big) \big) r dr d\theta,
  \]
  but the change of variables is more complicated and done as a path
  integration in the complex plane. Denote $\omega =
  -\cos(\theta/2+\pi)-i\sin(\theta/2+\pi)$ and note $\Re\omega>0$ when
  $\theta_m<\theta<\theta_M$. Then
  \[
  \int_0^\infty \exp\big({-\omega}\sqrt{sr}\big) r dr = \frac{2}{s^2}
  \int_0^\infty \exp\big({-\omega}t\big) t^3 dt.
  \]
  Since $\Re\omega>0$ the integral converges and we have
  \[
  \int_0^\infty \exp\big({-\omega}t\big) t^3 dt =
  \lim_{\varepsilon\to0+} \int_\varepsilon^{1/\varepsilon} \exp
  \big({-\omega}t\big) t^3 dt = \lim_{\varepsilon\to0+}
  \omega^{-4} \int_{A_\varepsilon} f(z) dz
  \]
  where $A_\varepsilon = \{ z \in \C \mid z = \abs{z} \omega,\,
  \varepsilon < \abs{z} < 1/\varepsilon \}$ and $f(z) = \exp({-z})
  z^3$ is an entire function. We shall deal with the case of
  $\arg\omega>0$. The case $\arg\omega<0$ is dealt with similarly, and
  $\arg\omega=0$ is what we'll reduce the other cases to. Consider the
  contour integral over the path defined by
  \begin{align*}
    &A_\varepsilon = \{z\in\C \mid
    \varepsilon<\abs{z}<1/\varepsilon,\, \arg z = \arg\omega\},
    \\ &B_\varepsilon = \{z\in\C \mid \abs{z}=1/\varepsilon,\, 0<\arg
    z<\arg\omega\},\\ &C_\varepsilon = \{z\in\C \mid
    \varepsilon<\abs{z}<1/\varepsilon,\, \arg z=0\}, \\ &D_\varepsilon
    = \{z\in\C \mid \abs{z}=\varepsilon,\, 0<\arg z<\arg\omega\}
  \end{align*}
  in the anti-clockwise direction. The boundedness of $f$ implies that
  the integral over $D_\varepsilon$ vanishes as $\varepsilon\to0$. On
  $B_\varepsilon$, the function's modulus has maximum
  $\exp(-\varepsilon^{-1}\cos\arg\omega) \varepsilon^{-2}$ and the
  path has length $\varepsilon^{-1}\arg\omega$. Their product tends to
  zero as $\varepsilon\to0$ because $\Re\omega>0$ so
  $\cos\arg\omega>0$ and the exponential wins. Hence
  \[
  \int_{A_\varepsilon} f(z) dz = - \int_{C_\varepsilon} f(z) dz =
  \int_\varepsilon^{1/\varepsilon} e^{-t} t^3 dt \to \Gamma(4) = 6.
  \]
  by the holomorphicity of $f$. Combining all the integrals above we
  arrive at
  \[
  \int_0^\infty \exp\big({-\omega}\sqrt{sr}\big) r dr =
  \frac{12}{\omega^4 s^2}.
  \]
  Let us take the integral over $\theta$ next. Use the more convenient
  notation $\omega = -\exp i(\theta/2+\pi)$ so $\omega^4 = \exp
  i(2\theta+4\pi) = \exp i2\theta$. Thus
  \[
  \int_{\theta_m}^{\theta_M} \int_0^\infty \exp\big( {-\omega}
  \sqrt{sr}\big) r dr d\theta = \frac{12}{s^2}
  \int_{\theta_m}^{\theta_M} \exp( {-i}2\theta) d\theta
  \]
  and the claim follows.
\end{proof}

\section{Source corner scattering}
\begin{lemma} \label{sourceIdentity}
  Let $\Omega\subset\R^n$ be a bounded Lipschitz domain, $k\geq0$ and
  $u,u'\in H^2(\Omega)$, $f,f'\in L^2(\Omega)$ be functions such
  that
  \[
  (\Delta+k^2)u=f, \qquad (\Delta+k^2)u'=f'.
  \]
  Given any $u_0 \in H^2(\Omega)$ satisfying $(\Delta+k^2)u_0=0$, we
  have
  \[
  \int_\Omega (f-f')u_0 dx = \int_{\partial\Omega} \big( u_0
  \partial_\nu (u-u') - (u-u') \partial_\nu u_0 \big) d\sigma.
  \]
\end{lemma}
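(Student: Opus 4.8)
The identity is Green's second identity specialized to the Helmholtz operator, so the plan is to reduce to the standard integration-by-parts formula and then exploit that $u_0$ solves the homogeneous equation. First I would set $w = u - u'$, so that $w \in H^2(\Omega)$ and $(\Delta + k^2)w = f - f'$ in $L^2(\Omega)$. With this notation the asserted identity reads
\[
\int_\Omega (\Delta + k^2)w \cdot u_0 \, dx = \int_{\partial\Omega}\big(u_0 \partial_\nu w - w \partial_\nu u_0\big)\,d\sigma,
\]
and the task becomes to prove this single relation between $w$ and $u_0$.

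The core step is Green's second identity
\[
\int_\Omega (u_0 \Delta w - w \Delta u_0)\,dx = \int_{\partial\Omega}\big(u_0 \partial_\nu w - w \partial_\nu u_0\big)\,d\sigma,
\]
valid for the pair $w, u_0 \in H^2(\Omega)$ on a bounded Lipschitz domain. I would obtain it by applying Green's first identity (integration by parts) to $u_0 \in H^1(\Omega)$ against $\Delta w \in L^2(\Omega)$ and, symmetrically, to $w \in H^1(\Omega)$ against $\Delta u_0 \in L^2(\Omega)$, then subtracting the two; the interior gradient terms $\int_\Omega \nabla u_0 \cdot \nabla w\, dx$ appear in both and cancel. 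The boundary integrals are meaningful because the Lipschitz trace theorem assigns to every $H^2(\Omega)$ function well-defined Dirichlet and Neumann traces on $\partial\Omega$, and the first identity extends to Sobolev functions on Lipschitz domains by the density of smooth functions up to the boundary.

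With Green's identity in hand, I would invoke the only special feature of $u_0$, namely $(\Delta + k^2)u_0 = 0$, that is $\Delta u_0 = -k^2 u_0$. Substituting this into the left-hand side turns the term $-w\,\Delta u_0$ into $+k^2 w u_0$, so that
\[
\int_\Omega (u_0 \Delta w - w \Delta u_0)\,dx = \int_\Omega u_0\,(\Delta + k^2)w\,dx = \int_\Omega (f - f')\,u_0\,dx,
\]
which is precisely the claimed identity.

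The point requiring genuine care is not the algebra but the functional-analytic justification of the boundary terms: on a merely Lipschitz domain one must appeal to the trace theorem to guarantee that $\partial_\nu w$ and $\partial_\nu u_0$ exist as elements of $H^{1/2}(\partial\Omega)$ and that the integration-by-parts formula remains valid in this low-regularity setting. Once that standard machinery is granted, the cancellation of the $k^2$ terms is immediate and the lemma follows.
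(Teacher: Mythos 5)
Your proof is correct and follows essentially the same route as the paper's own (very terse) argument: both write $f-f'=(\Delta+k^2)(u-u')$, apply Green's second identity to the pair $u-u'$, $u_0$, and use $(\Delta+k^2)u_0=0$ to kill the remaining volume term. You merely spell out the functional-analytic details (traces on Lipschitz domains, density of smooth functions) that the paper leaves implicit, which is a fair expansion rather than a different approach.
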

\begin{proof}
  The equations imply that
  \begin{align*}
    &\int_\Omega (f-f')u_0 dx = \int_\Omega u_0 (\Delta+k^2)(u-u') dx
    \\ &\qquad = \int_{\partial\Omega} \big( u_0 \partial_\nu (u-u') -
    (u-u') \partial_\nu u_0 \big) d\sigma
  \end{align*}
  because $(\Delta+k^2)u_0=0$.
\end{proof}

We cannot use \cref{sourceIdentity} directly because $u_0 \not\in
H^2$ near the origin. Instead we have to pass by a limit and use the
fact that in applications we will have $u=u'$ on the boundary
integral.
\begin{lemma} \label{u0intByParts}
  Let $u_0:\R^2\to\C$ from \cref{decaySector} and
  \[
  \mathcal C = \{x\in\R^2 \mid x\neq0,\,
  \theta_m<\arg(x_1+ix_2)<\theta_M \}
  \]
  for given angles $-\pi<\theta_m<\theta_M<\pi$. Assume that $u,u'\in
  H^2(\mathcal C\cap B)$ where $B=B(0,h)$ for some $h>0$. Moreover let
  $u=u'$ and $\partial_\nu u = \partial_\nu u'$ in $B\cap
  \partial\mathcal C$. Then
  \begin{align*}
    &\int_{\mathcal C\cap B} u_0(sx) \Delta(u-u') dx \\& \qquad =
    \int_{\mathcal C\cap\partial B} \big( u_0(sx) \partial_\nu (u-u')
    - (u-u') \partial_\nu (u_0(sx)) \big) d\sigma
  \end{align*}
  for $s>0$.
\end{lemma}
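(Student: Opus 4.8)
The plan is to read the claimed equality as Green's second identity for the harmonic field $u_0(s\,\cdot)$ and the $H^2$ function $g:=u-u'$, the only genuine subtlety being that $u_0(s\,\cdot)$ is not $H^2$ at the corner $0\in\partial(\mathcal C\cap B)$. To sidestep this I would excise a small disc: for $0<\varepsilon<h$ set $\Omega_\varepsilon=\mathcal C\cap\big(B\setminus\overline{B(0,\varepsilon)}\big)$. On $\Omega_\varepsilon$ the map $x\mapsto u_0(sx)$ is smooth and, by \cref{decaySector}, harmonic, since the sector together with the annulus stays inside the domain of harmonicity $\R^2\setminus(\R_-\times\{0\}\cup\{0\})$ thanks to $-\pi<\theta_m<\theta_M<\pi$; meanwhile $g\in H^2(\Omega_\varepsilon)$. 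As $\Omega_\varepsilon$ is Lipschitz, Green's second identity applies and, using $\Delta u_0=0$, gives
\[
\int_{\Omega_\varepsilon} u_0(sx)\,\Delta g\,dx = \int_{\partial\Omega_\varepsilon}\big(u_0(sx)\,\partial_\nu g - g\,\partial_\nu(u_0(sx))\big)\,d\sigma .
\]

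Next I would split $\partial\Omega_\varepsilon$ into the outer arc $\mathcal C\cap\partial B$, the inner arc $\mathcal C\cap\partial B(0,\varepsilon)$, and the two radial segments in $\partial\mathcal C\cap(B\setminus B(0,\varepsilon))$. On the radial segments the hypotheses $u=u'$ and $\partial_\nu u=\partial_\nu u'$ force $g=0$ and $\partial_\nu g=0$, so those pieces contribute nothing and the boundary integral reduces to the two arcs. Letting $\varepsilon\to0$, the left-hand side converges to $\int_{\mathcal C\cap B}u_0(sx)\,\Delta g\,dx$ by dominated convergence, since $\abs{u_0(sx)}\le1$ on $\mathcal C$ (the cosine appearing in \cref{decaySector} is negative there) and $\Delta g\in L^2\subset L^1$; the outer-arc integral is independent of $\varepsilon$. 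Everything then reduces to showing that the inner-arc integral tends to $0$.

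The main obstacle is precisely this inner-arc estimate, where $\partial_\nu(u_0(sx))$ is singular. From $u_0(z)=\exp(z^{1/2})$ one reads off $\abs{\nabla_x(u_0(sx))}\lesssim s^{1/2}\abs{x}^{-1/2}$, of size $\varepsilon^{-1/2}$ on the arc, whose length is $\lesssim\varepsilon$. For the term $\int g\,\partial_\nu(u_0(sx))\,d\sigma$ I would invoke the two-dimensional embedding $H^2(\mathcal C\cap B)\hookrightarrow C^0$, so that $g$ is bounded near the corner and this term is $\lesssim \varepsilon^{-1/2}\cdot\sup_{\partial B(0,\varepsilon)}\abs g\cdot\varepsilon\to0$. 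The delicate term is $\int u_0(sx)\,\partial_\nu g\,d\sigma$, because $\nabla g$ is merely $L^2$ and need not be continuous in the plane. Bounding $\abs{u_0(sx)}\le1$ and applying Cauchy--Schwarz on the arc yields $\big|\int u_0(sx)\,\partial_\nu g\,d\sigma\big|\lesssim\big(\varepsilon\,\norm{\nabla g}_{L^2(\mathcal C\cap\partial B(0,\varepsilon))}^2\big)^{1/2}$. Writing $H(\varepsilon)=\norm{\nabla g}_{L^2(\mathcal C\cap\partial B(0,\varepsilon))}^2$, polar coordinates give $\int_0^h H(\varepsilon)\,d\varepsilon=\norm{\nabla g}_{L^2(\mathcal C\cap B)}^2<\infty$, so $\varepsilon\mapsto\varepsilon H(\varepsilon)$ cannot remain bounded below by a positive constant near $0$ (else $\int_0^h H$ would diverge logarithmically); hence $\liminf_{\varepsilon\to0}\varepsilon H(\varepsilon)=0$. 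I would therefore select a sequence $\varepsilon_j\to0$ along which $\varepsilon_j H(\varepsilon_j)\to0$, so that both inner-arc terms vanish along it. Since the left-hand side and the outer-arc integral have $\varepsilon$-independent limits, passing to this sequence delivers the claimed identity.
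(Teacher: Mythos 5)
Your proof is correct, and its overall skeleton (excise $B(0,\varepsilon)$, apply Green's identity on the Lipschitz annular sector, kill the radial segments using the Cauchy data matching, pass to the limit) is the same as the paper's. The genuine difference is how the delicate inner-arc term $\int_{\mathcal C\cap\partial B(0,\varepsilon)} u_0(sx)\,\partial_\nu g\,d\sigma$ is handled. The paper proves that this term tends to $0$ as $\varepsilon\to0$ (the full limit): after Cauchy--Schwarz it rescales $g_\varepsilon(y)=\partial_r g(\varepsilon y)$, applies the trace theorem on the fixed domain $\mathcal C\cap B(0,1)$, and obtains the bound $C\sqrt{\theta_M-\theta_m}\,\norm{u-u'}_{H^2(\mathcal C\cap B(0,\varepsilon))}$, which vanishes by absolute continuity of the integral over shrinking balls. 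You instead use Fubini in polar coordinates, $\int_0^h \norm{\nabla g}_{L^2(\mathcal C\cap\partial B(0,r))}^2\,dr=\norm{\nabla g}_{L^2(\mathcal C\cap B)}^2<\infty$, and a pigeonhole argument to get $\liminf_{\varepsilon\to0}\varepsilon H(\varepsilon)=0$, extracting only a good subsequence $\varepsilon_j$; this suffices because every other term in the identity has an $\varepsilon$-independent limit. Your route is more elementary (no rescaled trace theorem), at the price of a subsequence rather than a limit and of implicitly invoking the standard fact that the trace of $\nabla g$ on circles agrees a.e.\ with its Fubini restriction; the paper's route gives quantitative, full-limit control. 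A second, minor difference: for the term $\int g\,\partial_\nu(u_0(sx))\,d\sigma$ you use only boundedness of $g$ (the $C^0$ embedding), getting $O(\varepsilon^{1/2})$, whereas the paper uses the $C^{1/2}$ embedding together with $g(0)=0$ to get $O(\varepsilon)$; both are sufficient here.
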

\begin{proof}
  Let $\Omega_\varepsilon = (\mathcal C\cap B) \setminus
  B(0,\varepsilon)$ for $0<\varepsilon<h$. Since $\abs{u_0} \leq 1$ we
  have
  \[
  \int_{\mathcal C\cap B} u_0 \Delta(u-u') dx = \lim_{\varepsilon\to0}
  \int_{\Omega_{\varepsilon}} u_0 \Delta(u-u') dx
  \]
  and recall also that $u_0$ is smooth and harmonic in that same
  set. Hence Green's formula, and the condition of $u=u'$,
  $\partial_\nu u=\partial_\nu u'$ on $\partial\mathcal C\cap B$ give
  \begin{align*}
    &\ldots = \int_{\mathcal C\cap\partial B} \big( u_0 \partial_\nu
    (u-u') - (u-u') \partial_\nu u_0 \big) d\sigma \\ &\qquad
    -\lim_{\varepsilon\to0} \int_{\mathcal C\cap S(0,\varepsilon)}
    \big( u_0 \partial_\nu (u-u') - (u-u') \partial_\nu u_0 \big)
    d\sigma
  \end{align*}
  where $\partial_\nu = \partial_r$ is the radial derivative.

  We have $\abs{u_0(sx)} = \exp(\sqrt{sr}\cos(\theta/2+\pi))\leq1$ and
  \[
  \abs{\partial_r (u_0(sx))} = \abs{ \frac{\sqrt{s}
      \exp{(\theta/2+\pi)i} }{2\sqrt{r}}
    e^{\sqrt{sr}\exp(\theta/2+\pi)i} } \leq \frac{1}{2}
  \sqrt{\frac{s}{r}}.
  \]
  Note also that $H^2(\mathcal C\cap B) \hookrightarrow
  C^{1/2}(\overline{\mathcal C\cap B})$ in two (and three) dimensions,
  and that $(u-u')(0)=0$. Hence
  \[
  \abs{u(x)-u'(x)} = \abs{(u-u')(x) - (u-u')(0)} \leq C
  \norm{u-u'}_{H^2(\mathcal C\cap B)} \abs{x}^{1/2}
  \]
  for $x\in \mathcal C\cap B$ and in particular for $x\in \mathcal
  C\cap S(0,\varepsilon)$. This shows that
  \begin{align*}
    &\abs{\int_{\mathcal C\cap S(0,\varepsilon)} (u-u')(x)
      \partial_\nu(u_0(sx)) d\sigma(x) } \\ &\qquad \leq C
    \norm{u-u'}_{H^2(\mathcal C\cap B)} \varepsilon^{1/2} \frac{1}{2}
    \sqrt{\frac{s}{\varepsilon}} (\theta_M-\theta_m)\varepsilon \to 0
  \end{align*}
  as $\varepsilon\to0$.

  For the remaining term, we can estimate $\abs{u_0}\leq1$ and use
  Cauchy-Schwartz to get
  \begin{equation}\label{sphereIntegral}
    \abs{\int_{\mathcal C\cap S(0,\varepsilon)} u_0 \partial_\nu
      (u-u') d\sigma(x) } \leq \sqrt{(\theta_M-\theta_m)\varepsilon}
    \norm{ \partial_r (u-u') }_{L^2(\mathcal C\cap S(0,\varepsilon))}.
  \end{equation}
  Denote $g(x) = \partial_r (u-u')(x)$. Then $g\in H^1(\mathcal C\cap
  B)$ and $\norm{g}_{H^1} \leq \norm{u-u'}_{H^2}$ in any given open
  set. Let $G(y) = g(\varepsilon y)$. Then $d\sigma(x) = \varepsilon
  d\sigma(y)$ so
  \[
  \norm{g}_{L^2(\mathcal C\cap S(0,\varepsilon))} = \sqrt{\varepsilon}
  \norm{G}_{L^2(\mathcal C\cap S(0,1))} \leq C \sqrt{\varepsilon}
  \norm{G}_{H^1(\mathcal C\cap B(0,1))}
  \]
  by the trace-theorem, and $C$ is independent of $\varepsilon$.
  However $dx = \varepsilon^2 dy$, and $\nabla_y G(y) = \nabla_y
  (g(\varepsilon y)) = \varepsilon (\nabla g) (\varepsilon y)$ so
  \[
  \norm{G}_{L^2(\mathcal C\cap B(0,1))} = \varepsilon^{-1}
  \norm{g}_{L^2(\mathcal C\cap B(0,\varepsilon))}, \quad \norm{\nabla
    G}_{L^2(\mathcal C\cap B(0,1))} = \norm{\nabla g}_{L^2(\mathcal
    C\cap B(0,\varepsilon))}
  \]
  in other words $\norm{G}_{H^1(\mathcal C\cap B(0,1))} \leq
  \varepsilon^{-1} \norm{g}_{H^1(\mathcal C\cap
    B(0,\varepsilon))}$. This implies
  \[
  \norm{g}_{L^2(\mathcal C\cap S(0,\varepsilon))} \leq C
  \varepsilon^{-1/2} \norm{g}_{H^1(\mathcal C\cap B(0,\varepsilon))}
  \leq C \varepsilon^{-1/2} \norm{u-u'}_{H^2(\mathcal C\cap
    B(0,\varepsilon))}
  \]
  where $C$ is independent of $\varepsilon$. Combining these with
  \eqref{sphereIntegral} gives
  \[
  \ldots \leq C \sqrt{\theta_M-\theta_m} \norm{u-u'}_{H^2(\mathcal
    C\cap B(0,\varepsilon))}
  \]
  which tends to zero when $\varepsilon\to0$ because
  $\norm{u-u'}_{H^2(\mathcal C\cap B)}$ is finite.
\end{proof}

\begin{proposition} \label{wideCornerSourceValue}
  Let $\Omega\subset\R^2$ be a bounded domain and
  $-\pi<\theta_m<\theta_M<\pi$ with $\theta_M\neq\theta_m+\pi$. Assume
  that $0\in\partial\Omega$ is the centre of a ball $B$ for which
  $\Omega\cap B=\mathcal C\cap B$ where
  \[
  \mathcal C = \{x\in\R^2 \mid x\neq0,\, \theta_m < \arg(x_1+ix_2) <
  \theta_M \}
  \]
  is a cone of opening angle $\theta_M-\theta_m$ and vertex $0$.

  Let $u,u'\in H^2(\Omega\cap B)$ and $f,f'\in
  C^\alpha(\overline{\Omega\cap B})$, $\alpha>0$ solve
  \[
  \Delta u = f, \qquad \Delta u' = f'
  \]
  in $\Omega\cap B$. If $u=u'$ and $\partial_\nu u = \partial_\nu u'$
  on $\partial\Omega\cap B$ then
  \[
  f(0) = f'(0).
  \]
\end{proposition}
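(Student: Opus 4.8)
The plan is to test the equation against the rescaled harmonic function $u_0$ of \cref{decaySector} and to read off $f(0)-f'(0)$ from the leading $s^{-2}$ term of the resulting identity as $s\to\infty$. Write $w=u-u'$ and $g=f-f'$, so that $\Delta w=g$ in $\mathcal C\cap B$, while the hypotheses $u=u'$ and $\partial_\nu u=\partial_\nu u'$ on $\partial\Omega\cap B=\partial\mathcal C\cap B$ give $w=0$ and $\partial_\nu w=0$ there. Since $f,f'\in C^\alpha$, the function $g$ is H\"older continuous with $g(0)=f(0)-f'(0)$, and this is the quantity to be shown to vanish. First I would apply \cref{u0intByParts}, whose boundary hypotheses are exactly what we just verified, to obtain
\[
\int_{\mathcal C\cap B} u_0(sx)\, g\, dx = \int_{\mathcal C\cap\partial B} \big( u_0(sx)\,\partial_\nu w - w\,\partial_\nu(u_0(sx)) \big)\, d\sigma .
\]

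Next I would show the right-hand side is exponentially small as $s\to\infty$. On $\mathcal C\cap\partial B$ we have $\abs{x}=h$, so \cref{decaySector} gives $\abs{u_0(sx)}\le e^{-\delta_{\mathcal C}\sqrt{sh}}$, and the same exponential controls the radial derivative via $\abs{\partial_r(u_0(sx))}=\tfrac{\sqrt{s}}{2\sqrt{r}}\,\abs{u_0(sx)}\le\tfrac12\sqrt{s/h}\,e^{-\delta_{\mathcal C}\sqrt{sh}}$. Combined with the trace bounds $\norm{w}_{L^2(\mathcal C\cap\partial B)},\norm{\partial_\nu w}_{L^2(\mathcal C\cap\partial B)}\le C\norm{w}_{H^2}$, the whole boundary integral is $O(e^{-\delta_{\mathcal C}\sqrt{sh}})$, hence negligible even after multiplication by $s^2$.

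For the left-hand side I would split $g=g(0)+(g-g(0))$. The constant part contributes $g(0)\int_{\mathcal C\cap B}u_0(sx)\,dx$; using \eqref{2Dlower} for the integral over the full cone $\mathcal C$ and \eqref{2DfarFromCorner} to discard the tail over $\mathcal C\setminus B$, this equals $g(0)\cdot 6i\big(e^{-2i\theta_M}-e^{-2i\theta_m}\big)s^{-2}$ up to an $O(s^{-2}e^{-\delta_{\mathcal C}\sqrt{hs}/2})$ error. The remaining part is controlled by H\"older continuity, $\abs{g(x)-g(0)}\le C\abs{x}^\alpha$, together with the upper bound \eqref{2Dupper}, so its contribution is $O(s^{-\alpha-2})$. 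Multiplying the identity by $s^2$ and letting $s\to\infty$, every term except the constant one vanishes, leaving
\[
g(0)\cdot 6i\big(e^{-2i\theta_M}-e^{-2i\theta_m}\big)=0 .
\]

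The main point, and the only place the angle hypotheses genuinely enter, is that the leading coefficient is nonzero: since $\theta_m,\theta_M\in(-\pi,\pi)$ we have $\theta_M-\theta_m\in(0,2\pi)$, and $e^{-2i\theta_M}=e^{-2i\theta_m}$ would force $\theta_M-\theta_m\in\pi\Z$, i.e. $\theta_M=\theta_m+\pi$, which is excluded. Hence $g(0)=0$, that is $f(0)=f'(0)$. The delicate step is the bookkeeping of orders: the true signal sits at order $s^{-2}$, so one must verify both that the H\"older remainder is \emph{strictly} smaller (this is where $\alpha>0$ is used) and that the boundary correction is genuinely exponentially small. The location of the branch cut, $\R_-\times\{0\}\not\subset\mathcal C$ as ensured by $\theta_m,\theta_M\in(-\pi,\pi)$, is what keeps $u_0$ harmonic and exponentially decaying throughout $\mathcal C\cap B$, so all these estimates are available.
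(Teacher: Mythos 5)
Your proof is correct and takes essentially the same route as the paper's own: the integral identity of \cref{u0intByParts}, the splitting of $f-f'$ into its corner value plus a H\"older remainder, the estimates \eqref{2Dupper}, \eqref{2Dlower}, \eqref{2DfarFromCorner} of \cref{2DupperAndLower}, exponential smallness of the boundary term via trace bounds, and the nonvanishing of the leading coefficient precisely because $\theta_M-\theta_m\notin\pi\Z$. The only cosmetic differences are that you merge $\delta f$ and $\delta f'$ into a single remainder $g-g(0)$ and normalise by $s^2$ before passing to the limit, rather than comparing decay rates directly.
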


\begin{proof}
  \Cref{u0intByParts} gives
  \[
  \int_{\mathcal C \cap B} (f-f') u_0(sx) dx = \int_{\mathcal C \cap
    \partial B} \big( u_0(sx) \partial_\nu (u-u') - (u-u')
  \partial_\nu (u_0(sx)) \big) d\sigma
  \]
  when $s>0$ and $u_0$ is given by \cref{decaySector}.

  Let us split and estimate the integral over $\mathcal C\cap B$. By
  the H\"older-continuity of $f$ and $f'$ we get the splitting
  \begin{align}
    f(x) &= f(0) + \delta f(x),& \abs{\delta f(x)} &\leq
    \norm{f}_{C^\alpha} \abs{x}^\alpha,\label{fEstim}\\ f'(x) &= f'(0)
    + \delta f'(x), & \abs{\delta f'(x)} &\leq \norm{f'}_{C^\alpha}
    \abs{x}^\alpha.\label{f'Estim}
  \end{align}
  By \cref{2DupperAndLower} the map $x\mapsto u_0(sx)$ decays well
  enough for the following telescope identity to be well-posed
  \begin{align*}
    &\int_{\mathcal C \cap \partial B} \big( u_0(sx) \partial_\nu
    (u-u') - (u-u') \partial_\nu (u_0(sx)) \big) d\sigma \\ &\qquad =
    \int_{\mathcal C\cap B} (f-f') u_0(sx) dx = (f(0)-f'(0))
    \int_{\mathcal C\cap B} u_0(sx) dx \\ &\qquad\quad +
    \int_{\mathcal C\cap B} \delta f(x) u_0(sx) dx - \int_{\mathcal
      C\cap B} \delta f'(x) u_0(sx) dx,\\ &\int_{\mathcal C \cap B}
    u_0(sx) dx = \int_{\mathcal C \setminus B} u_0(sx) dx -
    \int_{\mathcal C} u_0(sx) dx,
  \end{align*}
  and hence
  \begin{align}
    &(f(0)-f'(0)) \int_{\mathcal C} u_0(sx) dx = (f(0)-f'(0))
    \int_{\mathcal C \setminus B} u_0(sx) dx \notag\\ &\qquad \quad+
    \int_{\mathcal C\cap B} \delta f(x) u_0(sx) dx - \int_{\mathcal
      C\cap B} \delta f'(x) u_0(sx) dx \notag\\ &\qquad \quad -
    \int_{\mathcal C \cap \partial B} \big( u_0(sx) \partial_\nu
    (u-u') - (u-u') \partial_\nu (u_0(sx)) \big)
    d\sigma(x). \label{2DsourceSplitting}
  \end{align}

  We will estimate the various terms in the identity
  \eqref{2DsourceSplitting} next. The first term on the right-hand
  side decays exponentially as $s\to\infty$ because of
  \cref{2DupperAndLower}. For the next two terms use \eqref{fEstim}
  and \eqref{f'Estim} and then the same proposition again. This gives
  the bound
  \[
  \int_{\mathcal C} \abs{\delta f(x)} \abs{u_0(sx)} dx \leq \frac{2
    (\theta_M-\theta_m) \Gamma(2\alpha+4)}{\delta_{\mathcal
      C}^{2\alpha+4}} \norm{f}_{C^\alpha} s^{-\alpha-2}
  \]
  and similarly for $\delta f'$. The boundary integral's absolute
  value can be estimated by first noting that if $r$ is the radius of
  $B$ and $x\in\partial B$, we have
  \[
  \abs{u_0(sx)} = e^{\sqrt{s r} \cos(\theta/2+\pi)} \leq
  e^{-\delta_{\mathcal C}\sqrt{r} \sqrt{s}}
  \]
  and
  \begin{align*}
    &\abs{\partial_\nu (u_0(sx))} = \abs{ \frac{\sqrt{s}
        e^{i(\theta/2+\pi)}}{2\sqrt{r}} e^{\sqrt{sr}
        \exp(i(\theta/2+\pi))}} = \frac{1}{2} \sqrt{\frac{s}{r}}
    e^{\sqrt{sr}\cos(\theta/2+\pi)} \\ &\qquad\leq \frac{1}{2}
    \sqrt{\frac{s}{r}} e^{-\delta_{\mathcal C}\sqrt{r}\sqrt{s}}
  \end{align*}
  both of which decay exponentially as $s\to\infty$. This implies the
  same for their $L^2(\mathcal C \cap \partial B)$-norm. Note that the
  same norm of $u-u'$ and $\partial_\nu(u-u')$ can be estimated above
  by $\norm{u-u'}_{H^2(\mathcal C \cap B)} \leq
  \norm{u-u'}_{H^2(\Omega)}$ according to the trace theorem. Hence
  \[
  \abs{\int_{\mathcal C \cap \partial B} \big( u_0(sx) \partial_\nu
    (u-u') - (u-u') \partial_\nu (u_0(sx)) \big) d\sigma} \leq C
  e^{-c' \sqrt{s}}
  \]
  when $s\to\infty$ for some $c'>0$.

  For the lower bound of the left of \eqref{2DsourceSplitting} use the
  identity in \cref{2DupperAndLower}
  \[
  \abs{ (f(0)-f'(0)) \int_{\mathcal C} e^{\rho\cdot x} dx } =
  \abs{f(0)-f'(0)} 6 \abs{1 - e^{2(\theta_M-\theta_m)i}} s^{-2}.
  \]
  Since $\theta_M-\theta_m\not\in \pi\Z$ the above does not vanish
  unless $f(0)=f'(0)$. Letting $s\to\infty$ in
  \eqref{2DsourceSplitting} and combining it with the estimates of the
  previous paragraphs implies that $f(0)=f'(0)$.
\end{proof}

We prove that the above proposition can be used also in three
dimensions next. This is done by a dimension reduction argument
involving the Fourier transform along the edge of a polyhedron. One
could also add a smooth change of coordinates to deal with curvilinear
edges, however we shall skip that in this article to keep the argument
simple.
\begin{lemma}\label{dimensionReduction}
  Let $\mathcal C\subset\R^{n-1}$ be a locally Lipschitz set, $L>0$,
  $\alpha>0$ and $u,u'\in H^2(\mathcal C \times {]{{-L},L}[}) \cap
  C^\alpha(\overline{\mathcal C} \times {[{{-L},L}]})$ and $f,f' \in
  C^\alpha(\overline{\mathcal C} \times {[{{-L},L}]})$. Write
  $x=(x',x_n)$ and assume that
  \begin{align*}
    &\Delta u(x) = f(x), \quad \Delta u'(x) = f(x), && x' \in \mathcal
    C, {-L}<x_n<L\\ &u=u', \quad \partial_\nu u = \partial_\nu u', &&
    x'\in\Gamma, {-L}<x_n<L
  \end{align*}
  for some open $\Gamma\subset\partial\mathcal C$. Let $\varphi\in
  C^\infty_0({]{{-L},L}[})$ and fix $\xi\in\R$. Define the dimension
  reduction operator $T_\xi$ by
  \[
  T_\xi g(x') = \int_{-L}^L e^{-i x_n \xi} \varphi(x_n) g(x',x_n) dx_n
  \]
  where $x'\in\mathcal C$.

  Then $T_\xi u, T_\xi u'\in H^2(\mathcal C) \cap C^\alpha(\mathcal
  C)$ and there is $F_\xi, F'_\xi \in C^\alpha(\mathcal C)$ such that
  \begin{align*}
    &\Delta T_\xi u (x') = F(x'), \quad \Delta T_\xi u'(x') = F'(x'),
    && x' \in \mathcal C\\ &T_\xi u = T_\xi u', \quad \partial_\nu
    T_\xi u = \partial_\nu T_\xi u', && x'\in\Gamma.
  \end{align*}
  Finally
  \[
  (F_\xi-F'_\xi)(x') = T_\xi(f-f')(x'), \qquad x'\in\partial\Gamma.
  \]
\end{lemma}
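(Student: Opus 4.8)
The plan is to exploit the splitting $\Delta = \Delta' + \partial_{x_n}^2$, where $\Delta'$ is the Laplacian in the $x'$ variables, together with the fact that $T_\xi$ integrates only in the $x_n$ variable. Consequently $T_\xi$ commutes with every operation carried out in $x'$ (weak differentiation, traces, and the normal derivative on $\partial\mathcal{C}$), while the only genuinely $x_n$-dependent piece, namely $\partial_{x_n}^2 u$, will be handled by integrating by parts against the smooth compactly supported weight $e^{-ix_n\xi}\varphi(x_n)$, which produces no boundary terms precisely because $\varphi\in C_0^\infty({]{-L},L[})$.

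First I would establish the regularity of $T_\xi u$ (and $T_\xi u'$). Testing against $\psi\in C_0^\infty(\mathcal{C})$ and using Fubini shows that the weak $x'$-derivatives satisfy $\partial_{x'}^\beta T_\xi u = T_\xi(\partial_{x'}^\beta u)$ for $\abs{\beta}\le 2$; since each $\partial_{x'}^\beta u\in L^2$ and the weight is bounded, Minkowski's integral inequality gives $T_\xi u\in H^2(\mathcal{C})$. For the Hölder bound I would estimate directly
\[
\abs{T_\xi u(x') - T_\xi u(y')} \le \norm{\varphi}_{L^1}\,[u]_{C^\alpha}\,\abs{x'-y'}^\alpha,
\]
so that in fact $T_\xi u\in C^\alpha(\overline{\mathcal{C}})$; the same applies to $u'$, $f$, $f'$.

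Next I would compute the reduced equation. Since $T_\xi u$ depends only on $x'$, one has $\Delta T_\xi u = \Delta' T_\xi u = T_\xi(\Delta' u) = T_\xi f - T_\xi(\partial_{x_n}^2 u)$. For a.e.\ fixed $x'$ the slice $u(x',\cdot)$ lies in $H^2({]{-L},L[})$, so integrating by parts twice in $x_n$ and using the compact support of $\varphi$ gives $T_\xi(\partial_{x_n}^2 u)(x') = \int_{-L}^L u(x',x_n)\,\partial_{x_n}^2\big(e^{-ix_n\xi}\varphi(x_n)\big)\,dx_n$. This motivates setting
\[
F_\xi(x') := T_\xi f(x') - \int_{-L}^L u(x',x_n)\,\partial_{x_n}^2\big(e^{-ix_n\xi}\varphi(x_n)\big)\,dx_n,
\]
and analogously $F'_\xi$; each is an integral of a $C^\alpha$ function against a smooth $x_n$-kernel, hence lies in $C^\alpha(\mathcal{C})$ by the same estimate as above, and by construction $\Delta T_\xi u = F_\xi$, $\Delta T_\xi u' = F'_\xi$. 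For the Cauchy data I would note that the outward normal to $\partial\mathcal{C}\times{]{-L},L[}$ has no $x_n$-component, so $\partial_\nu$ involves only $x'$-derivatives and commutes with $T_\xi$, giving $\partial_\nu T_\xi u = T_\xi(\partial_\nu u)$ as traces on $\Gamma$; combined with $u=u'$ and $\partial_\nu u=\partial_\nu u'$ on $\Gamma\times{]{-L},L[}$ and linearity of $T_\xi$, this yields $T_\xi u = T_\xi u'$ and $\partial_\nu T_\xi u = \partial_\nu T_\xi u'$ on $\Gamma$.

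Finally, subtracting the two definitions gives
\[
(F_\xi - F'_\xi)(x') = T_\xi(f-f')(x') - \int_{-L}^L (u-u')(x',x_n)\,\partial_{x_n}^2\big(e^{-ix_n\xi}\varphi\big)\,dx_n,
\]
and on $\Gamma$ the trace $u-u'$ vanishes, killing the last integral, so $(F_\xi-F'_\xi)=T_\xi(f-f')$ there; since both sides are continuous up to $\partial\mathcal{C}$ by the $C^\alpha$ regularity established above, the identity extends from $\Gamma$ to $\partial\Gamma$. The computations are routine; the point requiring the most care is the systematic justification that $T_\xi$ commutes with weak $x'$-differentiation, with traces, and with the normal derivative — all resting on the fact that $T_\xi$ acts only in $x_n$ — together with the vanishing of the integration-by-parts boundary terms, which is exactly what the compactly supported cutoff $\varphi$ is there to guarantee.
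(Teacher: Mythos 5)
Your proposal is correct and follows essentially the same route as the paper's proof: commuting $T_\xi$ with the $x'$-derivatives, integrating by parts in $x_n$ against the compactly supported weight to define $F_\xi$ (your unexpanded $\partial_{x_n}^2\big(e^{-ix_n\xi}\varphi\big)$ is exactly the paper's three-term expression), and using $\nu\cdot e_n=0$ for the Cauchy data. Your closing remark extending the identity from $\Gamma$ to $\partial\Gamma$ by $C^\alpha$-continuity is a small extra step the paper glosses over (its proof only states the identity on $\Gamma$), but otherwise the arguments coincide.
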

\begin{proof}
  Let us start by showing that $T_\xi:H^2(\mathcal
  C\times{]{{-L},L}[}) \to H^2(\mathcal C)$. Let $u\in
  C^\infty(\overline{\mathcal C} \times {[{{-L},L}]})$. Dominated
  convergence implies $\partial_{x'}^\beta T_\xi u (x') = T_\xi
  \partial_{x'}^\beta u (x')$ for any multi-index $\beta\in\N^{n-1}$,
  so
  \[
  \abs{ \partial_{x'}^\beta T_\xi u(x') } \leq \int_{-L}^L
  \norm{\varphi}_\infty \abs{\partial_{x'}^\beta u(x',x_n)} dx_n,
  \]
  which gives
  \[
  \norm{T_\xi u}_{H^2(\mathcal C)} \leq \norm{\varphi}_\infty
  \norm{u}_{H^2(\mathcal C \times {]{{-L},L}[})}
  \]
  by the Minkowski integral inequality. This gives a unique bounded
  extension to $u\in H^2$. The case of $u\in
  C^\alpha(\overline{\mathcal C} \times {[{{-L},L}]})$ follows even
  more directly:
  \[
  \abs{ T_\xi u(x') - T_\xi u(y') } \leq 2\norm{\varphi}_\infty
  \norm{u}_{C^\alpha} \abs{x'-y'}^\alpha.
  \]

  For the identities concerning $\Delta T_\xi u$, recall that
  $\Delta_{x'} u = f - \partial_{x_n}^2 u$. Integration by parts gives
  \begin{align*}
    &\Delta_{x'} T_\xi u(x') = T_\xi f(x') - T_\xi \partial_{x_n}^2 u
    (x') = - \int_{-L}^L e^{-ix_n\xi_n} \varphi''(x_n) u(x',x_n) dx_n
    \\ &\qquad + 2i\xi_n \int_{-L}^L e^{-ix_n\xi_n} \varphi'(x_n)
    u(x',x_n) dx_n + \xi_n^2 T_\xi u(x') + T_\xi f(x')
  \end{align*}
  and we let $F_\xi(x')$ be the right-hand side above. Do the same for
  $u'$ and $f'$. These are easily seen to be in $C^\alpha$ with
  respect to $x'$, just as in the previous paragraph.

  Since $u=u'$ when $x'\in\Gamma$, the same holds for $T_\xi u$ and
  $T_\xi u'$.  Also, $\partial_\nu u = \partial_\nu u'$ on
  $x'\in\Gamma$ implies the same for $\partial_\nu T_\xi u$ and
  $\partial_\nu T_\xi u'$, because $\nu \cdot e_n = 0$ in
  $\R^n$. These imply a fortiori that $F_\xi(x') - F'_\xi(x') =
  T_\xi(f-f')(x')$ for $x'\in\Gamma$.
\end{proof}

\begin{proposition} \label{wideEdgeSourceValue}
  Let $\Omega\subset\R^3$ be a bounded domain with
  $0\in\partial\Omega$. Let $-\pi<\theta_m<\theta_M<\pi$ with
  $\theta_M\neq\theta_m+\pi$ and define
  \[
  \mathcal C = \{x\in\R^2 \mid x\neq0,\, \theta_m < \arg(x_1+ix_2) <
  \theta_M \}.
  \]
  Let $B\subset\R^2$ be an origin-centred ball and assume that there
  is $L>0$ such that
  \[
  \big(B\times{]{{-L},L}[}\big) \cap \Omega = (B\cap\mathcal C) \times
      {]{{-L},L}[}
  \]
  i.e. $\Omega$ has an edge of opening angle $\theta_M-\theta_m$.

  Let $u,u'\in H^2((B\times{]{{-L},L}[}) \cap \Omega)$ and $f,f'\in
  C^\alpha(\overline{(B\times{]{{-L},L}[}) \cap \Omega})$, $\alpha>0$
  solve
  \[
  \Delta u = f, \qquad \Delta u' = f'
  \]
  in $B\cap\Omega$. If $u=u'$ and $\partial_\nu u = \partial_\nu u'$
  on $B\cap\partial\Omega$ then
  \[
  f(0) = f'(0).
  \]
\end{proposition}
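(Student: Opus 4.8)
The plan is to reduce the three-dimensional edge problem to the two-dimensional corner result \cref{wideCornerSourceValue} by integrating along the edge direction $x_3$, and then to recover the pointwise value $f(0)=f'(0)$ from the resulting family of identities by a Fourier argument in the dual variable. Throughout write $0'=(0,0)\in\R^2$ for the vertex of the planar sector $\mathcal C$, so that the edge of the wedge is $\{0'\}\times{]{{-L},L}[}$ and $0=(0',0)$.

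First I would fix a cutoff $\varphi\in C_0^\infty({]{{-L},L}[})$ with $\varphi(0)\neq0$. Since $u,u'\in H^2$ in three dimensions, the Sobolev embedding gives $u,u'\in C^\beta$ for some $\beta>0$, so after shrinking $\alpha$ if necessary the hypotheses of \cref{dimensionReduction} are met with $\mathcal C\cap B$ in the role of the $(n-1)$-dimensional locally Lipschitz set (here $n=3$) and with $\Gamma=\partial\mathcal C\cap B$ the union of the two faces of the wedge, where the Cauchy data of $u$ and $u'$ coincide by hypothesis. Applying \cref{dimensionReduction} at a fixed frequency $\xi\in\R$ then produces $T_\xi u,T_\xi u'\in H^2(\mathcal C\cap B)\cap C^\alpha$ solving $\Delta T_\xi u=F_\xi$ and $\Delta T_\xi u'=F'_\xi$ in the planar sector, with $T_\xi u=T_\xi u'$ and $\partial_\nu T_\xi u=\partial_\nu T_\xi u'$ on $\Gamma$, and with $(F_\xi-F'_\xi)=T_\xi(f-f')$ on $\Gamma$. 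Because $u=u'$ on $\partial\Omega\cap B$ and both functions are continuous up to the edge, the equality $u=u'$ persists along $\{0'\}\times{]{{-L},L}[}$; tracing through the definition of $F_\xi$ this makes the identity extend by continuity to the vertex, so $(F_\xi-F'_\xi)(0')=T_\xi(f-f')(0')$.

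Next, for each fixed $\xi$ the reduced data satisfy precisely the hypotheses of \cref{wideCornerSourceValue}: a bounded planar domain agreeing near the origin with the open sector $\mathcal C$ of opening angle $\theta_M-\theta_m$ with $\theta_M\neq\theta_m+\pi$, together with $H^2$ solutions, $C^\alpha$ right-hand sides, and matching Cauchy data on $\partial\mathcal C\cap B$. That proposition therefore yields $F_\xi(0')=F'_\xi(0')$, and combining this with the vertex identity above gives
\[
T_\xi(f-f')(0')=\int_{-L}^{L} e^{-ix_3\xi}\varphi(x_3)\,(f-f')(0',x_3)\,dx_3=0
\]
for every $\xi\in\R$.

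Finally I would invoke injectivity of the Fourier transform. The map $x_3\mapsto\varphi(x_3)(f-f')(0',x_3)$ is a compactly supported continuous function whose Fourier transform vanishes identically by the display above, so the function itself is zero; evaluating at $x_3=0$, where $\varphi(0)\neq0$, gives $(f-f')(0',0)=0$, that is $f(0)=f'(0)$. The analytic heart of the argument — the almost-everywhere-decaying harmonic test function $u_0$ and the telescoping energy identity — is already packaged inside \cref{wideCornerSourceValue}, so the genuinely new ingredient here is the dimension reduction followed by this Fourier-injectivity step that converts the family of vanishing weighted moments into a pointwise value at the edge. The point to watch most carefully is the extension of the identity $(F_\xi-F'_\xi)(0')=T_\xi(f-f')(0')$ to the vertex itself rather than merely to the open faces $\Gamma$, which is exactly where continuity of $u-u'$ up to the edge is needed.
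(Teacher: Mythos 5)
Your proposal is correct and follows essentially the same route as the paper: Sobolev embedding to get H\"older regularity, the dimension reduction operator $T_\xi$ of \cref{dimensionReduction}, then \cref{wideCornerSourceValue} at each frequency $\xi$, and finally injectivity of the Fourier transform (the paper invokes Fourier inversion) to recover $f(0)=f'(0)$. Your extra care in extending the identity $(F_\xi-F'_\xi)(0')=T_\xi(f-f')(0')$ from the faces to the vertex by continuity is exactly the point the paper builds into the statement of \cref{dimensionReduction} (which asserts the identity on $\partial\Gamma$), so nothing is genuinely different.
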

\begin{proof}
  $H^2$ embeds into $C^{1/2}$ by the Sobolev embedding, and we may
  assume that $\alpha\leq 1/2$. For any $\xi\in\R$
  \cref{dimensionReduction} implies the existence of $U,U'\in
  H^2(B\cap\mathcal C) \cap C^\alpha(\overline{B\cap\mathcal C})$ and
  $F_\xi,F'_\xi \in C^\alpha(\overline{B\cap\mathcal C})$ such that
  \[
  \Delta U = F_\xi, \qquad \Delta U' = F'_\xi
  \]
  in $B\cap\mathcal C$ and on $B\cap\partial\mathcal C$ we have
  $U=U'$, $\partial_\nu U = \partial_\nu
  U'$. \Cref{wideCornerSourceValue} implies that $F_\xi(0) =
  F'_\xi(0)$. Since
  \[
  0 = (F_\xi-F'_\xi)(0) = \int_{-L}^L e^{-ix_n\xi} \varphi(x_n)
  (f-f')(0,x_n) dx_n
  \]
  for any given $\varphi\in C^\infty_0({]{{-L},L}[})$ and for all
  $\xi\in\R$, the Fourier inversion formula shows that $f(0) = f'(0)$.
\end{proof}

\section{Theorems}
\begin{theorem}[Sources with corners or edges radiate]\label{thm1}
  Let $f = \chi_\Omega \varphi$ for a bounded domain
  $\Omega\subset\R^n$, $n\in\{2,3\}$ and bounded function $\varphi \in
  L^\infty(\R^n)$.

  Assume that $\Omega$ has a corner (2D) or an edge (3D) and that
  $\varphi$ is H\"older-continuous near it. Moreover there must be a
  chain of balls in $\R^n\setminus\Omega$ connecting this corner or
  edge to infinity.

  Let $k>0$ and $u\in H^2_{loc}(\R^n)$ have acoustic source $f$,
  namely
  \[
  (\Delta+k^2) u = f, \qquad \lim_{\abs{x}\to\infty}
  \abs{x}^{\frac{n-1}{2}} \left(\frac{x}{\abs{x}} \cdot \nabla -
  ik\right) u(x) = 0
  \]
  where the limit is uniform over the direction $x/\abs{x}$. If $u(x)$
  has zero far-field pattern, then $\varphi=0$ on the corner or edge
  i.e. $f$ has no jumps at these locations.
\end{theorem}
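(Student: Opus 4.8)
The plan is to reduce the claim to the corner- and edge-value propositions already established, by comparing the given wave $u$ with the trivial wave $u'\equiv 0$. I place the corner (or a point of the edge) at the origin, so that near it $\Omega$ coincides with the sector $\mathcal C$ of \cref{wideCornerSourceValue} in two dimensions, or with the wedge of \cref{wideEdgeSourceValue} in three dimensions.

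First I would exploit the vanishing far-field pattern. Since $f=\chi_\Omega\varphi$ is supported in $\overline\Omega$, the wave $u$ solves the homogeneous Helmholtz equation $(\Delta+k^2)u=0$ in $\R^n\setminus\overline\Omega$ and satisfies the radiation condition with $u^\infty=0$. By Rellich's lemma $u$ vanishes identically in the unbounded connected component of $\R^n\setminus\overline\Omega$. Using the hypothesised chain of balls lying in $\R^n\setminus\Omega$ and joining the corner or edge to infinity, weak unique continuation for the second-order elliptic operator $\Delta+k^2$ propagates this vanishing along the chain, so that $u\equiv 0$ on the exterior side of $\partial\Omega$ in a ball $B$ about the origin.

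Next I would transfer this to Cauchy data. Because $u\in H^2_{loc}(\R^n)$, both $u$ and $\nabla u$ have well-defined traces that match across $\partial\Omega\cap B$; since $u$ and its normal derivative vanish from the exterior, the interior traces satisfy $u=0$ and $\partial_\nu u=0$ on $\partial\Omega\cap B$. Inside $\Omega\cap B$ I rewrite the source equation as the Poisson equation $\Delta u=\tilde f$ with $\tilde f:=\varphi-k^2 u$. Its right-hand side is H\"older continuous near the corner: $\varphi$ is H\"older by assumption, and $u\in H^2_{loc}\hookrightarrow C^\alpha$ for some $\alpha\in(0,1/2]$ by Sobolev embedding in dimensions two and three.

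Finally I apply the corner/edge result with the comparison pair $u'\equiv 0$, $f'\equiv 0$: the matching Cauchy data $u=u'$, $\partial_\nu u=\partial_\nu u'$ on $\partial\Omega\cap B$ hold by the previous step, so \cref{wideCornerSourceValue} in 2D or \cref{wideEdgeSourceValue} in 3D yields $\tilde f(0)=\tilde f'(0)=0$. Since $u$ is continuous and vanishes on $\partial\Omega\cap B$, we have $u(0)=0$, hence $\tilde f(0)=\varphi(0)$, and therefore $\varphi$ vanishes at the corner or edge point, as claimed. I expect the main obstacle to be the second and third steps: carefully combining Rellich's lemma with unique continuation along the chain of balls to force $u$ to vanish on the exterior side near the corner, and then converting that exterior vanishing into interior Cauchy data through the $H^2$-interface regularity, so that the comparison with $u'\equiv 0$ is legitimate.
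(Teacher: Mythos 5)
Your proposal is correct and follows essentially the same route as the paper's proof: Rellich's lemma plus unique continuation along the chain of balls to obtain vanishing Cauchy data for $u$ on $\partial\Omega$ near the corner or edge, then comparison with the trivial pair $u'\equiv 0$, $f'\equiv 0$ via \cref{wideCornerSourceValue} and \cref{wideEdgeSourceValue} applied to $\Delta u = \varphi - k^2 u$, whose right-hand side is H\"older by Sobolev embedding, yielding $\varphi(x_c) - k^2 u(x_c) = 0$ and hence $\varphi(x_c)=0$. Your write-up merely spells out the trace-matching and embedding details that the paper leaves implicit.
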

\begin{proof}
  Rellich's theorem and unique continuation imply that $u=0$ in the
  connected component of $\R^n\setminus\Omega$ that reaches
  infinity. Hence $u=0$ and $\partial_\nu u=0$ on the boundary of the
  corner or edge. On the other hand
  \[
  \Delta u = f-k^2 u, \qquad \Delta 0 = 0
  \]
  in $\R^n$ and all the smoothness assumptions of
  \cref{wideCornerSourceValue} and \cref{wideEdgeSourceValue} are
  satisfied. They imply that $\varphi-k^2u = 0$ at the corner or
  edge. However $u$ is zero there, so $\varphi$ vanishes too.
\end{proof}

In \cite{BLvanishing} we showed that under the specialised condition
below and certain geometric assumptions the transmission
eigenfunctions vanish at every corner of $\Omega$.
\begin{quote}
  ``If $w$ can be approximated in the $L^2(\Omega)$-norm by a sequence
  of Herglotz waves with uniformly $L^2(\mathbb S^{n-1})$-bounded
  kernels, then
  \[
  \lim_{r\to0} \frac{1}{m(B(x_c,r))} \int_{B(x_c,r)} \abs{w(x)} dx = 0
  \]
  where $x_c$ is any vertex of $\Omega$ such that $V(x_c) \neq 0$.''
\end{quote}
Using the new techniques we show the following
\begin{theorem} \label{vanishingThm}
  Let $n\in\{2,3\}$ and $\Omega\subset\R^n$ be a bounded domain. Let
  $V\in L^\infty(\Omega)$. Assume that $k>0$ is a transmission
  eigenvalue: there exists $v,w \in L^2(\Omega)$ such that \eqref{TE}
  holds.

  \smallskip
  Let $x_c$ be any vertex or edge of $\Omega$ such that $V$ is
  $C^\alpha$ smooth, $\alpha>0$ near $x_c$.  If $v$ or $w$ is
  $H^2$-smooth in a neighbourhood of $x_c$ in $\Omega$ then
  $w(x_c)=v(x_c)=0$ if $V(x_c) \neq 0$.
\end{theorem}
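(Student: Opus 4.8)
The plan is to recast the transmission eigenvalue system \eqref{TE} as a pair of Poisson-type equations with matching Cauchy data, so that the corner/edge source propositions apply almost verbatim. Concretely, I would set $u=v$ and $u'=w$ and move the zeroth-order terms to the right-hand side: near $x_c$ the second line of \eqref{TE} reads $\Delta v=-k^2(1+V)v$ and the first reads $\Delta w=-k^2 w$. So I define
\[
f := -k^2(1+V)v, \qquad f' := -k^2 w,
\]
which gives $\Delta u=f$ and $\Delta u'=f'$ in a neighbourhood of $x_c$ in $\Omega$.

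The boundary hypothesis of \cref{wideCornerSourceValue} (and of \cref{wideEdgeSourceValue} in three dimensions) is exactly the condition $v-w\in H^2_0(\Omega)$: it says that both the trace and the normal-derivative trace of $v-w$ vanish on $\partial\Omega$, i.e.\ $u=u'$ and $\partial_\nu u=\partial_\nu u'$ on $\partial\Omega$ near $x_c$. For the regularity hypotheses I would first argue that \emph{both} eigenfunctions are $H^2$ near $x_c$: by assumption one of them is, and since they differ by the globally $H^2$ function $v-w\in H^2_0(\Omega)$, the other is $H^2$ near $x_c$ as well. The Sobolev embedding $H^2\hookrightarrow C^{1/2}$ used elsewhere in the paper then makes $v,w$ Hölder continuous up to the boundary locally, so that, taking $\beta=\min(\alpha,1/2)$, the products $f=-k^2(1+V)v$ and $f'=-k^2 w$ lie in $C^\beta(\overline{\Omega\cap B})$, using that $V$ is $C^\alpha$ near $x_c$ by hypothesis.

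With every hypothesis checked, \cref{wideCornerSourceValue} or \cref{wideEdgeSourceValue} yields $f(x_c)=f'(x_c)$, that is,
\[
(1+V(x_c))\,v(x_c) = w(x_c).
\]
To close the argument I would use the boundary condition once more: since $v-w$ is continuous up to $\partial\Omega$ near $x_c$ and its boundary trace vanishes, evaluating at the corner/edge point gives $v(x_c)=w(x_c)$. Substituting this into the displayed identity yields $V(x_c)\,v(x_c)=0$, and because $V(x_c)\neq0$ we conclude $v(x_c)=0$, hence $w(x_c)=v(x_c)=0$.

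The individual steps are light, so I do not expect a genuinely hard computation here; the analytic content has already been packaged into the source propositions. The point requiring the most care is the regularity bookkeeping: one must ensure the assumed $H^2$-smoothness of a single eigenfunction propagates to the other through $v-w\in H^2_0(\Omega)$, that the resulting Hölder continuity of the product $(1+V)v$ is genuine, and that the vanishing of the boundary trace may legitimately be read off pointwise at $x_c$ thanks to continuity up to the boundary. In three dimensions the edge case needs no extra work, since \cref{wideEdgeSourceValue} already internalises the Fourier transform along the edge via the dimension reduction of \cref{dimensionReduction}.
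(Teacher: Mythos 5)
Your proposal is correct and follows essentially the same route as the paper's own proof: recast \eqref{TE} as $\Delta u=f$, $\Delta u'=f'$ with matching Cauchy data from $v-w\in H^2_0(\Omega)$, apply \cref{wideCornerSourceValue} or \cref{wideEdgeSourceValue} to get $f(x_c)=f'(x_c)$, and combine with $v(x_c)=w(x_c)$ to conclude $V(x_c)v(x_c)=0$. You even tidy up two points the paper leaves implicit — the correct sign of the sources and the propagation of $H^2$-regularity from one eigenfunction to the other via $v-w\in H^2_0(\Omega)$ — so no further changes are needed.
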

This new assumption is simply a condition on the boundary: elliptic
regularity guarantees $H^2$-smoothness in any domain in $\Omega$ whose
boundary is disjoint from $\partial\Omega$. This is generally true
based on numerical evidence \cite{BLvanishingNum}. That paper also
shows a case where the transmission eigenfunctions are not
$H^2$-smooth: if the corner is not convex then actually $v$ and $u$
blow up. This observation screams for a mathematical proof.

\begin{proof}[Proof of \cref{vanishingThm}]
  Set $f=k^2(1+V)v$, $f'=k^2w$ and $u=v$, $u'=w$. Let $B\subset\R^n$
  be an $x_c$-centred ball of sufficiently small radius so that $V\in
  C^\alpha(\overline{B\cap\Omega})$ and so that $v,w\in
  H^2(B\cap\Omega)$. In two and three dimensions this embeds into
  $C^{1/2}$, and we may assume that $\alpha\leq1/2$.

  Let $x_c\in\partial\Omega$ be a corner (2D) or edge point (3D). The
  source terms $f,f'$ are H\"older-continuous and $u,u'$ are
  $H^2$-smooth in $B\cap\Omega$. Also $u=u'$ and $\partial_\nu
  u=\partial_\nu u'$ on $\partial\Omega$ near $x_c$ because
  $u-u'=v-w\in H^2_0(\Omega)$. \Cref{wideCornerSourceValue} and
  \cref{wideEdgeSourceValue} imply that $f(x_c)=f'(x_c)$. However
  $v(x_c)=w(x_c)$ so the latter implies $k^2V(x_c)v(x_c)=0$. The
  conclusion follows.
\end{proof}

Finally we can also determine the shape of a convex polyhedral source,
and also the source values at corners and edges. The two-dimensional
case was shown in \cite{Ikehata} earlier.

\begin{theorem}[Source shape and value determination]\label{thm2}
  Let $n\in\{2,3\}$ and $\Omega,\Omega'\subset\R^n$ be bounded convex
  polyhedral domains. Let $\varphi,\varphi' \in L^\infty(\R^n)$ be
  $C^\alpha$-smooth, $\alpha>0$ in some neighbourhoods of the vertices
  of $\Omega,\Omega'$, respectively, and have nonzero value there.

  Define $f = \chi_\Omega\varphi$, $f' = \chi_{\Omega'}\varphi'$. Let
  $k>0$ and $u,u'\in H^2_{loc}(\R^n)$ have acoustic sources $f,f'$,
  namely
  \[
  (\Delta+k^2) u = f, \qquad (\Delta+k^2) u' = f',
  \]
  and
  \[
  \lim_{\abs{x}\to\infty} \abs{x}^{\frac{n-1}{2}}
  \left(\frac{x}{\abs{x}} \cdot \nabla - ik\right) u(x) =
  \lim_{\abs{x}\to\infty} \abs{x}^{\frac{n-1}{2}}
  \left(\frac{x}{\abs{x}} \cdot \nabla - ik\right) u'(x) = 0
  \]
  where the limit is uniform over the direction $x/\abs{x}$.

  If $u$ and $u'$ have the same far-field pattern then
  $\Omega=\Omega'$ and $\varphi=\varphi'$ at each of their
  vertices. In three dimensions, if $\varphi$ and $\varphi'$ are
  H\"older-continuous near the edges, then also $\varphi=\varphi'$ on
  these edges.
\end{theorem}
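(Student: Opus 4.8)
The plan is to argue by contradiction, extracting from any discrepancy between $\Omega$ and $\Omega'$ a single corner or edge at which one of the source propositions forces the corresponding $\varphi$ to vanish, against the standing hypothesis. First I would set $w=u-u'$, which is a radiating solution of $(\Delta+k^2)w=f-f'$ with $w^\infty=0$. Exactly as in the proof of \cref{thm1}, Rellich's theorem shows that $w=0$ on the unbounded connected component $G$ of $\R^n\setminus(\overline\Omega\cup\overline{\Omega'})$; there $f=f'=0$, so $w$ solves the homogeneous Helmholtz equation and the identity $u=u'$ holds throughout $G$. This is the only place where the measurement $u^\infty={u'}^\infty$ enters.

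Next I would prove the geometric heart of the argument: if $\Omega\neq\Omega'$ then, after possibly interchanging the two bodies, $\Omega$ has a vertex $x_c\notin\overline{\Omega'}$ that is exposed toward infinity. Since $\Omega$ is the convex hull of its vertices and $\overline{\Omega'}$ is closed and convex, $\Omega\neq\Omega'$ forces some vertex of $\Omega$ (or of $\Omega'$) to lie outside the other body; otherwise both inclusions $\overline\Omega\subseteq\overline{\Omega'}$ and $\overline{\Omega'}\subseteq\overline\Omega$ would hold. Picking $p\in\overline\Omega\setminus\overline{\Omega'}$ and a hyperplane strictly separating $p$ from $\overline{\Omega'}$ with outward direction $e$, I would take $x_c$ to be a vertex of $\Omega$ maximising $\langle\cdot,e\rangle$. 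Then $\langle x_c,e\rangle>\max_{\overline{\Omega'}}\langle\cdot,e\rangle$, so a small ball $B$ about $x_c$ misses $\overline{\Omega'}$ entirely and meets $\Omega$ only in its corner cone $\mathcal C$, while the ray $x_c+te$ ($t>0$) escapes to infinity without re-entering either body, whence $B\setminus\overline\Omega\subseteq G$. Near $x_c$ the configuration is then precisely the one-sided corner (2D) or, along any edge emanating from $x_c$, the edge (3D) assumed in \cref{wideCornerSourceValue,wideEdgeSourceValue}. I expect this separation-and-reachability step to be the main obstacle, since it is what reduces a global shape inequality to the purely local hypotheses of the source propositions while simultaneously guaranteeing that the corner is fed by the exterior region where $u=u'$.

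With such an $x_c$ fixed, I would apply the appropriate source proposition. On $B$ we have $\Delta u=\varphi-k^2u$ inside $\Omega$ and $\Delta u'=-k^2u'$, because $\chi_{\Omega'}$ vanishes there. The exterior identity $u=u'$ on $B\setminus\overline\Omega$, together with the global $H^2_{loc}$ regularity of $u,u'$ (no jump of $u$ or $\nabla u$ across $\partial\Omega$, since the equation holds in all of $\R^n$), yields matching Cauchy data $u=u'$, $\partial_\nu u=\partial_\nu u'$ on $\partial\Omega\cap B$. Thus \cref{wideCornerSourceValue} gives $(\varphi-k^2u)(x_c)=-k^2u'(x_c)$ in 2D, and since $u(x_c)=u'(x_c)$ this forces $\varphi(x_c)=0$, contradicting the nonvanishing hypothesis. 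In 3D I would instead apply \cref{wideEdgeSourceValue} at interior points $x_e$ of an edge of $\Omega$ through $x_c$, chosen close enough that their neighbourhoods still avoid $\overline{\Omega'}$, obtaining $\varphi(x_e)=0$ along the edge and hence $\varphi(x_c)=0$ by continuity. Either way $\Omega=\Omega'$.

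Finally, with $\Omega=\Omega'$ I would re-run the same local computation at every vertex, now with $f=\chi_\Omega\varphi$ and $f'=\chi_\Omega\varphi'$ on a common corner cone. The exterior relation $u=u'$ and the matching Cauchy data are unchanged, so the source propositions give $(\varphi-k^2u)(x_c)=(\varphi'-k^2u')(x_c)$; using $u(x_c)=u'(x_c)$ this reads $\varphi(x_c)=\varphi'(x_c)$ at each vertex, and in three dimensions the edge proposition gives $\varphi=\varphi'$ at every edge point where both are H\"older-continuous. The remaining verifications---that the corner cone $\mathcal C$ has opening angle different from $\pi$ (automatic for a convex polyhedron), that $B\setminus\overline\Omega$ is connected, and that each vertex of a convex body is accessible from infinity---are routine and complete the argument.
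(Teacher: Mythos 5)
Your proposal is correct and follows essentially the same route as the paper's proof: Rellich's lemma plus unique continuation to get $u=u'$ outside the sources, a convexity/contradiction argument applying \cref{wideCornerSourceValue} (2D) or \cref{wideEdgeSourceValue} (3D, at edge points near the offending vertex) to force $\varphi(x_c)=0$, and then a re-run of the same local identity at common vertices and edges once $\Omega=\Omega'$ to obtain $\varphi=\varphi'$ there. The only difference is that you spell out the separating-hyperplane step and the accessibility of the extreme vertex, which the paper compresses into ``by convexity,'' and you leave implicit the $H^2\hookrightarrow C^{1/2}$ embedding that the paper invokes to make the effective sources $\varphi-k^2u$ and $-k^2u'$ H\"older continuous.
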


\begin{proof}
  By Rellich's lemma and unique continuation (e.g. Lemma 2.11 in
  \cite{CK}) we see that $u=u'$ in $\R^n\setminus(\Omega\cup\Omega')$.

  \smallskip
  We prove first that $\Omega=\Omega'$ by showing that the opposite
  leads to a contradiction. Assume $\Omega\not\subset\Omega'$. Then by
  convexity there is a corner (2D) or edge (3D) point
  $x_c\in\partial\Omega\setminus\overline{\Omega'}$ such that
  $\varphi\in C^\alpha$ near it and $\varphi(x_c)\neq0$. Since $u=u'$
  outside $\Omega\cup\Omega'$ we have $u=u'$ and $\partial_\nu
  u=\partial_\nu u'$ on $\partial\Omega$ near $x_c$. We also have
  \[
  \Delta u = \varphi-k^2u, \qquad \Delta u' = -k^2u'
  \]
  in $\Omega$ near $x_c$. Also, $u,u'\in H^2$ there, and it embeds
  into $C^{1/2}$ in two and three dimensions. Hence the source terms
  $\varphi-k^2u$ and $-k^2u'$ are H\"older-continuous.
  \Cref{wideCornerSourceValue} and \cref{wideEdgeSourceValue}
  imply that $\varphi(x_c)-k^2u(x_c)=-k^2u'(x_c)$. But we already know
  that $u(x_c)=u'(x_c)$. Thus $\varphi(x_c)=0$ but this is a
  contradiction with the choice of $x_c$. Hence
  $\Omega\subset\Omega'$. Similarly $\Omega'\subset\Omega$.

  \smallskip
  Now, after $\Omega=\Omega'$ let $x_c$ be any of its vertices in 2D
  or an edge point close to a vertex in 3D. We note that
  \[
  \Delta u = \varphi-k^2u, \qquad \Delta u' = \varphi'-k^2u'
  \]
  in $\Omega$ and $u,u'\in H^2$ which embeds into $C^{1/2}$. So the
  right-hand sides above are H\"older-continuous. Taking into account
  that $u=u'$ and $\partial_\nu u = \partial_\nu u'$ on
  $\partial\Omega$ near $x_c$, \cref{wideCornerSourceValue} and
  \cref{wideEdgeSourceValue} imply $\varphi-k^2u=\varphi'-k^2u'$ at
  $x_c$, and so $\varphi(x_c)=\varphi'(x_c)$. The same deduction works
  if $x_c$ is any arbitrary edge point near which $\varphi,\varphi'$
  are H\"older-continuous.
\end{proof}

\section{Relation to the enclosure method}\label{IkehataComparison}
After seeing a first version of this manuscript Professor Masaru
Ikehata kindly pointed us in the direction of \cite{Ikehata} and other
one measurement work on the enclosure method \cite{IkehataSurvey,
  IkehataConductivity, Friedman--Isakov, IkehataScattering,
  IkehataObstacle}. In the former he shows the following two results
\begin{theorem*}
  Let $u\in H^1(\Omega)$ with $\Omega\subset\R^2$ a bounded Lipschitz
  domain. Let $F\in (H^1_0(\Omega))^\ast$ and assume
  \[
  (\Delta+k^2)u = F, \qquad \Omega
  \]
  for a fixed $k>0$. If $F$ is compactly supported in $\Omega$ with
  $F=\chi_D\varphi$, $D$ a polygon away from $\partial\Omega$ and
  $\varphi\in L^\infty$ H\"older-continuous and non-vanishing near the
  vertices of $D$, then the convex hull of $D$ can be calculated from
  the knowledge of $(u,\partial_\nu u)$ on $\partial\Omega$.
\end{theorem*}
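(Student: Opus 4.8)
The plan is to recover the support function $h_D(\omega) = \max_{x\in D}\omega\cdot x$ of $D$ in every direction $\omega\in\mathbb S^1$; since $D$ and its convex hull share the same support function, knowing $h_D(\omega)$ for all $\omega$ determines $\mathrm{ch}(D) = \bigcap_\omega\{x : \omega\cdot x \le h_D(\omega)\}$. First I would introduce Ikehata's complex geometrical optics test function with linear complex phase,
\[
v_{\tau,t}(x) = \exp\big( \tau(\omega\cdot x - t) + i\sqrt{\tau^2+k^2}\,\omega^\perp\cdot x \big), \qquad \tau>0,
\]
where $\omega^\perp$ is $\omega$ rotated by $\pi/2$. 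The complex phase vector $\rho = \tau\omega + i\sqrt{\tau^2+k^2}\,\omega^\perp$ satisfies $\rho\cdot\rho = -k^2$, so $(\Delta+k^2)v_{\tau,t}=0$, and $\abs{v_{\tau,t}(x)} = e^{\tau(\omega\cdot x - t)}$ decays exponentially in the half-plane $\{\omega\cdot x < t\}$ and grows in $\{\omega\cdot x > t\}$ as $\tau\to\infty$. Note that $v_{\tau,t}$ is entire, so unlike the branch-cut function $u_0$ it lies in $H^2$ globally and \cref{sourceIdentity} applies directly, without the limiting procedure of \cref{u0intByParts}.

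Next I would form the \emph{indicator function} from the measured Cauchy data,
\[
I_\omega(\tau,t) = \int_{\partial\Omega}\big( v_{\tau,t}\,\partial_\nu u - u\,\partial_\nu v_{\tau,t}\big)\,d\sigma,
\]
which is computable from $(u,\partial_\nu u)|_{\partial\Omega}$. Applying \cref{sourceIdentity} with $u'=f'=0$ and $u_0 = v_{\tau,t}$ converts it into the volume integral
\[
I_\omega(\tau,t) = \int_\Omega v_{\tau,t}\,F\,dx = \int_D \varphi\,v_{\tau,t}\,dx,
\]
using that $D$ is compactly contained in $\Omega$. The task then reduces to reading off $h_D(\omega)$ from the asymptotics as $\tau\to\infty$: for $t > h_D(\omega)$ the integrand is exponentially small on all of $D$, so $\abs{I_\omega(\tau,t)}\to 0$; for $t < h_D(\omega)$ the integral is unbounded; and the threshold between these behaviours pins down $h_D(\omega)$ exactly, say as $h_D(\omega) = \inf\{t : I_\omega(\tau,t)\to 0\}$.

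The analytic heart of the argument, and the step I expect to be the main obstacle, is proving the blow-up for $t < h_D(\omega)$ without the oscillatory factor $e^{i\sqrt{\tau^2+k^2}\,\omega^\perp\cdot x}$ producing cancellation that destroys the leading growth. For a generic $\omega$ the maximum of $\omega\cdot x$ over $D$ is attained at a single vertex $x_*$, which is necessarily a convex vertex of the polygon, so near $x_*$ the set $D$ coincides with a cone of opening angle less than $\pi$. I would localise near $x_*$, split $\varphi(x) = \varphi(x_*) + \delta\varphi(x)$ with $\abs{\delta\varphi(x)} \le \norm{\varphi}_{C^\alpha}\abs{x-x_*}^\alpha$ as in \eqref{fEstim}, and evaluate the cone integral of $v_{\tau,t}$ explicitly by the path-integration technique used for \eqref{2Dlower} in \cref{2DupperAndLower}. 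This yields a leading term proportional to $\varphi(x_*)\,e^{\tau(h_D(\omega)-t)}\tau^{-2}$ with a nonzero geometric coefficient from the corner angles, while the Hölder remainder is smaller by a factor $\tau^{-\alpha}$ and the complement of the cone is exponentially smaller by the analogue of \eqref{2DfarFromCorner}. Because $\varphi(x_*)\neq 0$ by hypothesis, the leading term genuinely grows like $e^{\tau(h_D(\omega)-t)}$ when $t < h_D(\omega)$, giving the required unboundedness.

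The only remaining case is the finite set of directions $\omega$ orthogonal to an edge of $D$, where the supporting line meets $D$ along a whole segment; there the leading contribution is of order $\tau^{-1}$, coming from a one-dimensional integral over the edge, rather than $\tau^{-2}$, so the same threshold criterion still applies, and in any event $h_D$ is continuous in $\omega$ and is therefore determined on this exceptional set by its values on the complement. Collecting $h_D(\omega)$ over all $\omega\in\mathbb S^1$ recovers $\mathrm{ch}(D)$, which is the claimed conclusion.
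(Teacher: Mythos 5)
Your proposal is correct and takes essentially the same route as the proof the paper attributes to Ikehata and sketches in \cref{IkehataComparison}: the same linear-phase solutions $e^{x\cdot(\tau\omega+i\sqrt{\tau^2+k^2}\,\omega^\perp)-\tau t}$, the same boundary-to-volume integral identity (the analogue of \cref{sourceIdentity}), and the same asymptotic analysis in which the principal contribution comes from the vertex maximizing $\omega\cdot x$, so that exponential growth versus decay in $\tau$ of the indicator function pins down the support function and hence the convex hull. The details you fill in — the H\"older splitting at the maximizing vertex, the nonvanishing $\tau^{-2}$ cone coefficient, and dispatching the finitely many edge-orthogonal directions by continuity of $h_D$ — are consistent with that sketch.
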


\begin{theorem*}
  Let $u\in H^2(\Omega)$ with $\Omega\subset\R^2$ a bounded Lipschitz
  domain. Let $V=\chi_D\varphi$ with $D\subset\Omega$ a polygon away
  from $\partial\Omega$, and $\varphi\in L^\infty(\Omega)$
  H\"older-continuous and non-vanishing near any vertex of $D$. Let $u$
  satisfy
  \[
  (\Delta+k^2(1+V))u = 0, \qquad \Omega
  \]
  for a fixed $k>0$. Then $(u,\partial_\nu u)$ on $\partial\Omega$
  determines the convex hull of $D$ assuming that $u\neq0$ on any
  vertex of $D$.
\end{theorem*}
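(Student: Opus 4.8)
The plan is to realize Ikehata's enclosure method inside the framework of this paper: convert the interior Cauchy data into an \emph{indicator function} probed by complex geometrical optics solutions, and read off the support function of $D$ from the exponential growth rate of that indicator. Throughout write $h_D(\omega)=\sup_{x\in D}\omega\cdot x$ for the support function of $D$ in direction $\omega\in\mathbb S^1$. Since knowing $h_D(\omega)$ for every $\omega$ recovers the convex hull of $D$ as the intersection of the supporting half-planes, it suffices to reconstruct $h_D(\omega)$ for a dense set of directions from the boundary data.

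First I would set up the indicator function. For any $u_0$ with $(\Delta+k^2)u_0=0$ on a neighbourhood of $\overline\Omega$, \cref{sourceIdentity} applied with $u'=0$, $f'=0$ and $f=(\Delta+k^2)u=-k^2Vu=-k^2\chi_D\varphi u$ gives
\[
I(u_0):=\int_{\partial\Omega}\big(u_0\,\partial_\nu u-u\,\partial_\nu u_0\big)\,d\sigma=-k^2\int_D\varphi\,u\,u_0\,dx,
\]
whose left-hand side is computable from $(u,\partial_\nu u)|_{\partial\Omega}$. For each direction I would feed in the classical linear-phase solutions $u_0=u_\rho=e^{\rho\cdot x}$, with $\rho=\rho(\tau,\omega)\in\C^2$ chosen so that $\rho\cdot\rho=-k^2$ and $\abs{u_\rho(x)}=e^{\tau\,\omega\cdot x}$; these are entire and suffice here, because the convex hull only involves the \emph{convex} (extreme) vertices of $D$, so the non-convex CGO of \cref{decaySector} is not needed for this particular theorem.

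The heart of the argument is the asymptotic identity $\lim_{\tau\to\infty}\tfrac1\tau\log\abs{I(u_\rho)}=h_D(\omega)$. The upper bound $\abs{I(u_\rho)}\le k^2\norm{\varphi u}_{L^\infty(D)}\abs{D}\,e^{\tau h_D(\omega)}$ is immediate. For the lower bound I would fix a direction $\omega$ for which $\max_{x\in D}\omega\cdot x$ is attained at a single vertex $x_c$; such directions are dense, and for them $x_c$ is an extreme vertex of $D$, so $\varphi(x_c)\neq0$ and, by hypothesis, $u(x_c)\neq0$. Localizing the integral to a cone sector $D\cap B(x_c,\delta)$ and translating to $x_c$, the H\"older continuity of $\varphi$ together with $H^2\hookrightarrow C^{1/2}$ let me write $\varphi u=\varphi(x_c)u(x_c)+O(\abs{x-x_c}^\alpha)$. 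The leading term is $\varphi(x_c)u(x_c)\,e^{\rho\cdot x_c}$ times a cone integral of $e^{\rho\cdot y}$ that scales like $\tau^{-2}$ with a \emph{nonzero} angular constant—exactly the linear-phase analogue of the explicit, non-vanishing corner integral \eqref{2Dlower} computed in \cref{2DupperAndLower}. The H\"older remainder contributes only at the lower polynomial order $\tau^{-2-\alpha}$, while the part of $D$ away from $x_c$ is of strictly smaller exponential order $e^{\tau(h_D(\omega)-\delta')}$. Since $\abs{e^{\rho\cdot x_c}}=e^{\tau h_D(\omega)}$ and the prefactor $\varphi(x_c)u(x_c)$ is nonzero, the leading term survives and pins down $h_D(\omega)$.

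I expect the lower bound in the third step to be the main obstacle. Everything else is bookkeeping, but the non-vanishing of the localized corner integral is delicate: one must guarantee that the oscillatory factor $e^{i\,\Im(\rho)\cdot x}$ does not conspire with the cone geometry to cancel the leading term. This is precisely why $\omega$ is chosen generically, so that the supporting line meets $D$ in the single vertex $x_c$ and $\Im(\rho)$ is not aligned with a cone edge, and why the \emph{exact} evaluation of the cone integral in the spirit of \eqref{2Dlower} is needed rather than a mere size bound. Once $h_D(\omega)$ is determined for a dense set of directions, passing to its lower-semicontinuous extension recovers the full support function, and hence the convex hull of $D$.
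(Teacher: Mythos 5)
Your proposal is correct and follows essentially the same route the paper attributes to this theorem (Ikehata's enclosure method, sketched in \cref{IkehataComparison}): the Green-type identity of \cref{sourceIdentity} with effective source $-k^2\chi_D\varphi u$ turns the Cauchy data into an indicator functional, which is probed by linear-phase solutions $e^{x\cdot(\tau\omega+i\sqrt{\tau^2+k^2}\,\omega^\perp)}$ whose asymptotics are dominated by the corner term $\varphi(x_c)u(x_c)\neq0$, yielding the support function $h_D(\omega)$ and hence the convex hull. Your exact cone-integral evaluation indeed settles the non-cancellation worry, since for a supporting direction the integral equals $\abs{\det(e_1,e_2)}/\big((\rho\cdot e_1)(\rho\cdot e_2)\big)$, which has modulus comparable to $\tau^{-2}$ and cannot vanish.
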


These two theorems show that the enclosure method can be applied in
the context of a single measurement and polygonal geometry. The first
theorem above is equivalent to \cref{thm2} modulo smoothness index and
in two dimensions. Its proof starts with the integral identity
\[
\int_D \varphi u_0 dx = \int_{\partial\Omega} \big( u_0 \partial_\nu u
- u \partial_\nu u_0 \big) dx,
\]
with the choice of using complex planar waves of the form
\[
u_0(x) = e^{x\cdot(\tau\omega+i\sqrt{\tau^2+k^2} \omega^\perp) - \tau
  t}
\]
where $\omega,\omega^\perp\in\mathbb S^1$,
$\omega\cdot\omega^\perp=0$, and he then calculates the exact
asymptotic behaviour of the integral $\int_D \varphi u_0 dx$ as
$\tau\to\infty$ but $t\in\R$ is fixed. It turns out that the principal
term arises, depending on $\omega$, on a corner of $D$, and is
exponentially growing or decaying depending on whether $t$ is too
large to too small. This gives an indicator function for the convex
hull of $D$.

The argument of \cite{Ikehata} looks quite similar to
\cref{sourceIdentity} and the proof of \cref{wideCornerSourceValue},
if not for the use of the different exponential solutions of
\cref{2Du0Def}: both use a similar integral identity and calculate the
asymptotic expansion of an integral while noting that the main
contribution comes from the value at a corner. In this sense Ikehata's
enclosure method and the implied reconstruction algorithms should be
of great interest to people studying corner scattering and the
transmission eigenvalue problem at corners.

The above is a very general idea, and one which was the basis of
studying corner scattering starting from \cite{BPS}. After a deeper
study of Professor Ikehata's article \cite{Ikehata} we can claim the
following. The enclosure method with a single measurement of
\cite{Ikehata} applies to
\begin{itemize}
\item inverse source problems to recover the convex hull of a
  polyhedral source, or show that it must vanish at corners, or
\item in inverse scattering if the total wave does not vanish, to
  recover the convex hull of a polyhedral scatterer.
\end{itemize}
This implies the following. Consider our recent articles
\cite{BLstability, BLpiecewise, BLvanishing}. If one could modify
their proofs to use \cref{sourceIdentity} instead of the previously
used identity
\[
k^2 \int_\Omega (V-V') u' u_0 dx = \int_{\partial\Omega} \big( u_0
\partial_\nu (u-u') - (u-u') \partial_\nu u_0 \big) d\sigma
\]
with $(\Delta+k^2(1+V))u=0$, $(\Delta+k^2(1+V'))u'=0$,
$(\Delta+k^2(1+V))u_0=0$, then the enclosure method could give some
new insight. In particular it would give hope for reconstruction
formulas. This merits further study and collaboration.

Coming back to the results of this paper, we note that if we required
all corners and edges to be convex in \cref{thm1,vanishingThm}, then
\cref{thm2} and a-fortiori in two dimensions \cite{Ikehata} would
imply them. In essence the enclosure method with one measurement works
well for determining the shape in the source problem. However it seems
to have difficulties extracting information about non-convex corners
of the source or obstacle from a single far-field or Cauchy data
measurement.

\section{Conclusions}
In this article we studied the inverse source problem with acoustic
sources having a corner or edge. We showed that nonradiating sources
must vanish at convex and non-convex corners or edges in two and three
dimensions. This is in stark contrast with one dimension and also the
existence of $n$-dimensional spherical constant sources that are known
to be nonradiating. We also showed that a far-field pattern determines
the shape and corner and edge values of acoustic sources in the class
of polyhedral sources.

The above results were made possible by an integration by parts
formula that uses harmonic functions as test functions instead of
complex geometrical optics solutions. A new type of harmonic
exponential solution that decays in almost all directions allowed
considering non-convex corners and edges. Finally, these tools gave a
simple proof for the vanishing of $H^2$-smooth transmission
eigenfunctions on corners and edges.

Some interesting questions still remain open. Our proof has similar
major ideas as Ikehata's enclosure method. Could this give new insight
for example for reconstruction? Considering nonradiation, the inverse
source problem for the Maxwell equations is of great interest because
it has a richer set of nonradiating sources than the Helmholtz
equation \cite{Gbur}. In addition there is still the open question of
under what geometrical conditions are the transmission eigenfunctions
$H^2$-smooth.

\bibliography{source}{} \bibliographystyle{plainnat}

\end{document}